\newtheorem{theorem}{Theorem}[section]
\newtheorem{lemma}[theorem]{Lemma}
\newtheorem{proposition}[theorem]{Proposition}
\newtheorem{corollary}[theorem]{Corollary}
\theoremstyle{definition}
\newtheorem{definition}[theorem]{Definition}
\newtheorem{assumption}[theorem]{Assumption}
\theoremstyle{remark}
\newtheorem{remark}[theorem]{Remark}
\numberwithin{equation}{section}
\newcommand{\Z}{\mathbb{Z}}
\newcommand{\A}{\mathfrak{A}}
\begin{document}

\title{The Self-Linking Number in Planar Open Book Decompositions}

\author{Keiko Kawamuro}
\address{Department of Mathematics \\ 
The University of Iowa \\ Iowa City, IA 52240}
\email{kawamuro@iowa.uiowa.edu}

\subjclass[2000]{Primary 57M25, 57M27; Secondary 57M50}

\date{\today}

\keywords{}

\begin{abstract}

We construct a Seifert surface for a given null-homologous transverse link in a contact manifold that is compatible with a planar open book decomposition, then obtain a formula of the self-linking number.
It extends Bennequin's self-linking formula for braids in the standard contact $3$-sphere.

\end{abstract}

\maketitle



\section{Statement of the main theorem}

Let $S=S_{0, r}$ be an oriented $S^2$ with $r$ disks removed. 
See Figure~\ref{setting}. 
\begin{figure}[htpb!]
\begin{center}
\psfrag{G}{$\gamma$}
\psfrag{d}{$d$}
\psfrag{R}{$\rho$}
\psfrag{x}{$x$}
\psfrag{s}{$\sigma$}
\psfrag{e}{$e_{2,4}$}
\psfrag{A}{$\alpha_{2,4}$}
\psfrag{a}{$\alpha$}
\psfrag{1}{$\scriptstyle1$}
\psfrag{2}{$\scriptstyle 2$}
\psfrag{3}{$\scriptstyle 3$}
\psfrag{4}{$\scriptstyle 4$}
\psfrag{r}{$\scriptstyle r$}
\psfrag{n}{$\scriptstyle n$}
\includegraphics[width=.7\textwidth]{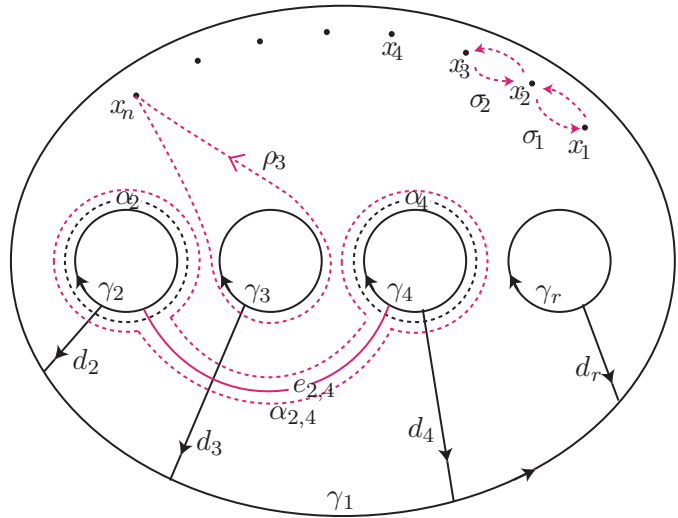}
\end{center}
\caption{Surface $S$.}
\label{setting}
\end{figure}
We call the boundary circles $\gamma_1, \cdots, \gamma_r$, whose orientations are induced by that of $S$.
Let $\alpha_i \subset S$ $(i=1, \cdots, r)$ be a circle parallel to $\gamma_i$ and disjoint form other $\alpha_j$. 
Let $A_i \in {\rm Mod}(S)$ denotes the positive Dehn twist about $\alpha_i$. 
See Figure~\ref{rh-twist}.
\begin{figure}[htpb!]
\begin{center}
\psfrag{A}{$\alpha$}
\includegraphics[width=.5\textwidth]{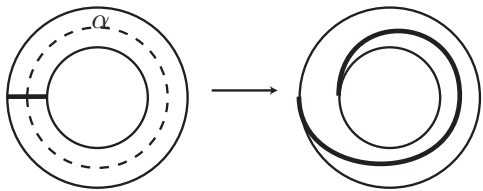}
\end{center}
\caption{A positive Dehn twist $A_\alpha$ about $\alpha$. }\label{rh-twist}
\end{figure}
Let $d_i$ ($i=2, \cdots, r$) be an arc from a point on $\gamma_i$ to a point on $\gamma_1$ with $d_i$ and $d_j$ disjoint.
For $2\leq i < j \leq r$, let $e_{i, j}$ be an arc connecting $\gamma_i$ and $\gamma_j$ such that it intersects $d_{i+1}, \cdots, d_{j-1},$ once for each of them in this order. 
Let $\alpha_{i, j}$ be a loop surrounding $\gamma_i \cup e_{i,j} \cup \gamma_j$, and $A_{i,j} \in {\rm Mod}(S)$ be the positive Dehn twist about $\alpha_{i,j}$. 
One can see that the $A_{i, j}$'s are the standard generators of the pure braid group of $(r-1)$ strands, see \cite[Lemma 1.8.2]{B}.

Since loops $\alpha_h$ and $\alpha_{i,j}$ do not intersect, the Dehn twists $A_h$ and $A_{i, j}$ commute. 
Viewing $A_1$ as a full twist of an $(r-1)$-stranded braid, $A_1$ can be  written as a product of $A_{i,j}$'s.
Let $\phi: S \to S$ be a diffeomorphism fixing the boundary $\partial S$ point-wise.
It is represented, up to isotopy, as a product of $A_i$ and $A_{i,j}$:
\begin{equation}\label{eq of phi}
\phi =  {(A_{i_l, j_l})}^{\epsilon_l} \cdots (A_{i_1, j_1})^{\epsilon_1} (A_r)^{k_r} \cdots (A_2)^{k_2}, 
\qquad \epsilon_i \in \Z \setminus\{0\},\  k_i  \in \Z,
\end{equation}
read from the right to left.
Let $M=M_{(S, \phi)}$ be the closed $3$-manifold admitting the open book decomposition $(S, \phi)$.
Namely, 
$
M_{(S, \phi)} = S \times [0,1] / \sim
$
where ``$\sim$'' is an equivalence relation identifying $(\phi(x), 0)$ and  $(x,1)$ for $x\in {\rm Int}(S)$, and identifying $(x, \tau)$ and $(x, 1)$ for $x \in \partial S$, $\tau \in [0, 1]$.

By Giroux's seminal work \cite{G}, there is a one to one correspondence between contact $3$-manifolds up to contact isotopy and open book decompositions up to positive stabilizations. 
By $\xi_{(S, \phi)}$, we denote the contact structure on $M_{(S, \phi)}$ corresponding to $(S, \phi)$.

\begin{remark}
In \cite[Theorem 1]{E1}, Etnyre shows that any overtwisted contact structure is supported by a planar open book decomposition. 
\end{remark}

A positively oriented {\em transverse} knot $K$ in a contact $3$-manifold $(M, \xi)$ is an embedding of $S^1$ such that at each point $p \in K$, the contact plane $\xi_p$ and $K$ have positive transverse intersection.
We say that transverse knots $K_0$ and $K_1$ are {\em transversely isotopic}, if there exists a smooth $1$-parameter family $K_t$ ($0\leq t \leq 1$) of transverse knots in the ambient contact manifold.

The {\em self linking number} is an invariant of null-homologous transverse links. 
For a given null-homologous transverse link $K$ and the relative homology class of a Seifert surface $[\Sigma]\in H_2(M, K; \Z)$, the self linking number $sl(K, [\Sigma])$ is an obstruction extending a nowhere vanishing smooth vector field $\{\vec 0\neq v_p \subset T_p\Sigma \cap \xi_p : p \in K\}$ over $K$ to a nowhere vanishing vector field over $\Sigma$ with values in $\xi_p$. See \cite[Def 3.5.28]{Ge} for precise definition.

In this paper we study the self linking number via braid theory.
By a {\em braid} in open book decomposition $(S, \phi)$, we mean a knot (or link) in $M_{(S, \phi)}$ that intersects positively each page of the open book. 
Due to Bennequin \cite{Ben} (for the standard contact $3$-sphere) and Pavelescu \cite{P} (for general case), we can identify a transverse knot in $(M_{(S, \phi)}, \xi_{(S, \phi)})$ up to transverse isotopy and a braid in $(S, \phi)$ up to braid isotopy and positive braid stabilization.

\begin{assumption}\label{assumption}
Let $b$ be an $n$-stranded braid in an open book $(S, \phi)$. 
Applying braid isotopy, we may assume that the braid $b$ intersects the page $S \times \{1\}$ in $n$ points, $x_1, \cdots, x_n$, sitting between $\gamma_1$ and $\alpha_1$ counterclockwise in this order. See Figure~\ref{setting}.
\end{assumption}

Next we define braid words $\sigma_i$ and $\rho_i$, see Figure~\ref{setting}: 
Consider an $n$-stranded trivial braid: $\bigcup_{i=1}^n (\{x_i\} \times [0,1])$ in $S \times [0,1]$.
We denote the local half right twist of the $i$-th and $(i+1)$-th strands by $\sigma_i$. 
Let $\rho_i$ ($i=2, \cdots, r$) denote the winding of the $n$-th strand once around the binding $\gamma_i$ counterclockwise.

\begin{proposition}
Any $n$-stranded braid $b$ in $S \times [0,1]$ is represented by a braid word in $\{ \sigma_1, \cdots, \sigma_{n-1}, \rho_2, \cdots, \rho_r \}$. 
\end{proposition}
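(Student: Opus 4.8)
The plan is to regard the braid $b$ as a motion of $n$ points on the page $S$ and to reduce it to elementary moves by cutting $S$ open along the arcs $d_2,\dots,d_r$ into a disk, where Artin's classical generation theorem applies. Concretely, a braid in $S\times[0,1]$ is the same datum as a loop $t\mapsto\{\beta_1(t),\dots,\beta_n(t)\}$ in the unordered configuration space of $n$ points of $\mathrm{Int}(S)$, based at $\{x_1,\dots,x_n\}$ (the endpoints form the same set as the starting points, so the path closes up even when the strands are permuted). Thinking of $\gamma_1$ as the outer boundary and $\gamma_2,\dots,\gamma_r$ as the $r-1$ inner holes, the pairwise disjoint arcs $d_i$ (each joining $\gamma_i$ to $\gamma_1$) cut $S$ along $\bigcup_{i=2}^r d_i$ into a disk $\widetilde D$: each cut merges one hole with the outer boundary, so after $r-1$ cuts the result is simply connected. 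This is the geometric reason the $\rho_i$ should suffice to account for all of the topology of $S$.

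First I would put the loop in general position with respect to the arcs, so that the set of times $0<t_1<\dots<t_m<1$ at which some $\beta_k(t)$ meets some $d_i$ is finite, at each such time exactly one strand crosses exactly one arc transversally, and $\{x_1,\dots,x_n\}$ is disjoint from the $d_i$. This slices the braid into pieces. On each complementary interval $[t_k,t_{k+1}]$ all strands stay in $\mathrm{Int}(S)\setminus\bigcup_i d_i$, an open disk, so that piece is a braid in a disk; by Artin's theorem its class is a product of the half-twists $\sigma_1,\dots,\sigma_{n-1}$, read off from a generic planar projection of the $n$ strands. Each crossing time $t_k$ is, by the very construction of $\rho_i$, a single counterclockwise winding of one strand around the hole $\gamma_i$, i.e. the move $\rho_i^{\pm1}$, except that the winding strand need not be the $n$-th one. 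Moving that strand to the $n$-th position and back is a word $w_k$ in the $\sigma_j$, so the crossing event equals $w_k\,\rho_i^{\pm1}\,w_k^{-1}$. Concatenating the pieces, $b$ becomes a product of $\sigma_j$'s and conjugates of $\rho_i$'s, hence a word in $\{\sigma_1,\dots,\sigma_{n-1},\rho_2,\dots,\rho_r\}$, as claimed.

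The step I expect to be the main obstacle is the bookkeeping at the crossing times: verifying that an arc crossing by an arbitrary strand is genuinely a conjugate of $\rho_i$ by a product of $\sigma_j$'s, so that in particular no extra generator is needed to wind an interior strand around a hole, and checking the signs and the counterclockwise convention throughout. Establishing the general position reduction rigorously, namely finiteness and transversality of the arc crossings and the fact that each disk piece can be closed up so that Artin's theorem literally applies, is routine but must be carried out with care. As an alternative to the cutting argument one could proceed inductively with the Fadell--Neuwirth fibration $\mathrm{Conf}_n(S)\to\mathrm{Conf}_{n-1}(S)$ forgetting the last point: its fiber $S\setminus\{n-1\ \text{points}\}$ has free fundamental group generated by loops of the $n$-th strand around the other strands (pure $\sigma$-words) and around the holes (the $\rho_i$), while the quotient onto the symmetric group $\Sigma_n$ is generated by the transpositions that lift to the $\sigma_i$. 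This route reaches the same conclusion but still requires the identical conjugation bookkeeping to reduce all windings to those of the $n$-th strand.
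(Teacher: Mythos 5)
Your argument is correct in outline and reaches the stated generation result, but it takes a genuinely different route from the paper. The paper's proof is a one-paragraph appeal to the generalized Birman exact sequence
$1 \to \pi_1(C(S,n)) \to \mathrm{Mod}(S^*) \to \mathrm{Mod}(S) \to 1$:
it identifies the $n$-stranded surface braid group with the kernel of the forgetful map and then simply cites the known generating set $\{\sigma_1,\dots,\sigma_{n-1},\rho_2,\dots,\rho_r\}$ for that kernel. You instead give a direct geometric proof: cut $S$ along the arcs $d_2,\dots,d_r$ into a disk, put the loop in the configuration space in general position with respect to the walls $d_i\times[0,1]$, apply Artin's generation theorem to the disk pieces, and absorb each wall crossing into a conjugate $w\,\rho_i^{\pm1}\,w^{-1}$ with $w$ a $\sigma$-word moving the crossing strand into the $n$-th position. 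What your approach buys is self-containedness and an explicit justification of the one point the paper leaves implicit, namely why windings of strands other than the $n$-th around the holes require no additional generators; what the paper's approach buys is brevity, plus the structural fact that the surface braid group injects into $\mathrm{Mod}(S^*)$, which is not needed for the proposition itself. The two details you flag as needing care are exactly the standard ones and both go through: a single transverse crossing of $d_i$ only becomes the closed loop $\rho_i^{\pm1}$ after you insert return paths joining the intermediate configurations to the base configuration (the same insertions that let Artin's theorem apply literally on each slice), and the conjugation bookkeeping works because all basepoints sit in a collar of $\gamma_1$ disjoint from the $d_i$. Your Fadell--Neuwirth alternative is essentially the proof of the exact sequence the paper cites, so that variant would bring you back to the paper's route.
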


\begin{proof}
Let $S^*$ be the surface $S$ with $n$ marked points $x_1, \cdots, x_n$.
Let $C(S, n)$ denote the configuration space of $n$ distinct unordered points in $S$. 
The fundamental group $\pi_1(C(S, n))$ is the $n$-stranded surface braid group of $S$.
Recall the generalized Birman exact sequence \cite{B}, \cite[Theorem 9.1]{FM}.
$$
1 
\longrightarrow \pi_1(C(S, n))  
\stackrel{\mathcal Push}{\longrightarrow}
{\rm Mod}(S^*)  
\stackrel{\mathcal Forget}\longrightarrow 
{\rm Mod}(S)  
\longrightarrow 1
$$
The map ${\mathcal Forget}$ is forgetting the $n$ marked points.
Hence $\ker({\mathcal Forget})=\pi_1(C(S, n))$ is generated by $\{ \sigma_1, \cdots, \sigma_{n-1}, \rho_2, \cdots, \rho_r \}$.
\end{proof}

We fix a braid word for a braid $b$ and define $a_\sigma$ to be the exponent sum of all the $\sigma_i$'s in the braid word.
Also, for each $j=2, \cdots, r$, let $a_{\rho_j}$ denote the exponent sum of $\rho_j$ in the braid word.

By Etnyre-Ozbagci~\cite{EO}, the $1$st homology group of $M_{(S, \phi)}$ is
\begin{equation}\label{EO-eq}
H_1(M_{(S, \phi)}; \Z)=\langle [\gamma_2], \cdots, [\gamma_r] \ | \ [d_i] - \phi_\ast [d_i]=0, \ i=2, \cdots, r \rangle.
\end{equation}
Let $t_{i,j}$ be integers such that 
\begin{equation}\label{rewrite}
[d_i] - \phi_\ast [d_i] = \sum_{j=2}^r t_{i,j} [\gamma_j] 
\quad \mbox{ in } H_1(S;\Z).
\end{equation}
If $b$ is null-homologous in $M$, then there exist integers $s_j$ such that
\begin{equation}\label{def of s_j}
\sum_{j=2}^r a_{\rho_j} [ \gamma_j ] =  \sum_{i=2}^r s_i \sum_{j=2}^r t_{i,j} [\gamma_j] \quad \mbox{ in } H_1(S;\Z).
\end{equation}
Comparing the coefficients, we get
\begin{equation}\label{a-rho-and-T}
a_{\rho_j} = \sum_{i=2}^r s_i \ t_{i,j}.
\end{equation}

Here is our main theorem, which will be proved in Section~\ref{proof-section}.

\begin{theorem}\label{sl-thm}
Let $b$ be a null-homologous $n$-stranded oriented braid in an open book $(S, \phi)$.
Let $a_\sigma, a_{\rho_j}, s_j, t_{i,j}$ be integers as defined above.
There is a 
Seifert surface $\Sigma \subset M$ such that the self-linking number of the braid relative to the class $[\Sigma] \in H_2(M, b;\Z)$ is computed as follows:
\begin{equation}\label{sl-formula}
sl(b, [\Sigma]) = 
-n + a_\sigma + \sum_{j=2}^r a_{\rho_j} (1 - s_j) - \sum_{j=2}^r s_j \sum_{{2\leq i \leq r} \atop{i \neq j}} t_{j,i}
\end{equation}
\end{theorem}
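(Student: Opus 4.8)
The plan is to compute $sl(b,[\Sigma])$ from its definition as the obstruction to extending the canonical section $v$ of $\xi|_b$, realized concretely as the linking number of $b$ with its transverse pushoff, and then to evaluate that linking number as an algebraic intersection with an explicitly built Seifert surface. First I would construct $\Sigma$ from the braid word. Writing $b$ as a word in $\sigma_1,\dots,\sigma_{n-1},\rho_2,\dots,\rho_r$, I apply Seifert's algorithm inside the open book: the $n$ strands give $n$ horizontal sheets lying in the pages, each letter $\sigma_i^{\pm1}$ contributes a half-twisted band joining two adjacent sheets, and each letter $\rho_j^{\pm1}$ contributes a strip wrapping once around the binding $\gamma_j$. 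The boundary of this braid surface is $b$ together with auxiliary curves that wind $a_{\rho_j}$ times around each $\gamma_j$. The null-homology hypothesis, expressed through the presentation \eqref{EO-eq} and the resulting identity \eqref{a-rho-and-T}, $a_{\rho_j}=\sum_i s_i t_{i,j}$, is exactly what lets me cap these auxiliary curves off: I attach, for each $i$, $s_i$ copies of a capping piece modeled on the arc $d_i$ together with its monodromy image $\phi(d_i)$, producing a surface $\Sigma$ with $\partial\Sigma=b$.

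Next I would fix a reference trivialization of $\xi$. Since each page $S$ is planar its tangent bundle is trivial away from the binding, and by Giroux compatibility $\xi$ is homotopic to the page tangent planes there, so $\xi$ is trivialized over the interior of every page; the trivialization twists near each binding component and is rotated by $\phi_\ast$ across the gluing $S\times\{1\}\sim S\times\{0\}$. Writing $b_v$ for the pushoff of $b$ along $v$, I would then compute $sl(b,[\Sigma])=\mathrm{lk}(b,b_v)=b_v\cdot\Sigma$ by summing the algebraic intersections of $b_v$ with the pieces of $\Sigma$ assembled above, using the reference trivialization to measure the winding of $v$ relative to the pages.

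I expect the contributions to organize into three groups. The sheets-and-bands part reproduces Bennequin's count \cite{Ben}: the $n$ horizontal sheets contribute $-n$ and each half-twisted band contributes the sign of its $\sigma_i$, giving $-n+a_\sigma$; this is already the full answer in the disk case $r=1$. Over each binding $\gamma_j$ the winding strip of $b_v$ contributes $a_{\rho_j}$, while the $s_j$ capping copies sitting over $\gamma_j$ intersect that winding with the opposite sign and subtract $a_{\rho_j}s_j$, for a net $a_{\rho_j}(1-s_j)$. Finally the capping piece modeled on $d_j$ is dragged across the monodromy, and since $\phi_\ast$ displaces $[d_j]$ by $\sum_i t_{j,i}[\gamma_i]$ it meets $b_v$ algebraically over the remaining bindings in a combination of the $t_{j,i}$, contributing $-\sum_j s_j\sum_{i\neq j}t_{j,i}$; summing the three groups gives \eqref{sl-formula}.

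I expect the main obstacle to be the framing bookkeeping across the monodromy and near the bindings. The Dehn twists $A_i$ about curves parallel to the binding and the pure-braid twists $A_{i,j}$ that make up $\phi$ each rotate the page trivialization of $\xi$, so I must check that the twisting absorbed by a capping piece as it crosses $S\times\{1\}\sim S\times\{0\}$ is recorded precisely by the homological coefficients $t_{j,i}$, and that the diagonal $i=j$ contribution is accounted for by the $a_{\rho_j}s_j$ term rather than double counted in the off-diagonal sum $\sum_{i\neq j}t_{j,i}$. Keeping the orientations consistent among the winding strips, the capping pieces, and the induced orientation of $\partial\Sigma=b$ is the delicate point; once the local model at a single binding circle and a single Dehn twist is fixed, additivity of the relative Euler number over the pieces of $\Sigma$ should deliver the global formula.
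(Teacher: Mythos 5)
Your computational route differs from the paper's: you propose to evaluate $sl(b,[\Sigma])$ as the algebraic intersection of a pushoff $b_v$ with $\Sigma$, measured against the page trivialization of $\xi$, whereas the paper perturbs $\Sigma$ to a Morse--Smale characteristic foliation and uses $sl=-(e^+-e^-)+(h^+-h^-)$, tallying elliptic and hyperbolic singularities piece by piece. Both routes are legitimate in principle, and your three groups of contributions ($-n+a_\sigma$ from the Bennequin part, $a_{\rho_j}(1-s_j)$ from the binding windings and caps, $-\sum_j s_j\sum_{i\neq j}t_{j,i}$ from the monodromy) match the paper's tally term by term.

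There are, however, two genuine gaps. First, the construction of $\Sigma$ is not actually carried out. You write that the identity \eqref{a-rho-and-T} ``lets me cap these auxiliary curves off'' with $s_i$ copies of a piece modeled on $d_i$ and $\phi(d_i)$, but (a) the $s_i$ need not be nonnegative, so ``attach $s_i$ copies'' is not yet meaningful --- the paper must first show (Proposition~\ref{positivity of s}) that after positive braid stabilizations one may assume $s_i\ge 0$, which changes $b$, $n$, $a_\sigma$ and the $a_{\rho_j}$ and has to be tracked; and (b) a homological identity does not by itself produce an embedded capping surface. The rectangle $d_i\times[0,1]$ has boundary arcs equal to $d_i$ on both pages $S\times\{0\}$ and $S\times\{1\}$, and these can only be identified under the gluing after one end has been transformed into $\phi(d_i)$; realizing that transformation is exactly what forces the cut-and-reglue resolutions with the $\mathfrak A$-annuli and the $T$- and $U$-tunnels in the paper, and it is those surgeries --- not the winding strips themselves --- that produce the $-s_j a_{\rho_j}$ and $-s_j\sum_{i\ne j}t_{j,i}$ corrections. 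Second, the contributions you assign to each piece are asserted rather than derived: you state that the caps meet the winding ``with the opposite sign'' and that the monodromy displacement contributes only the off-diagonal sum $\sum_{i\neq j}t_{j,i}$, but you yourself flag the diagonal-versus-off-diagonal bookkeeping and the framing twist across the gluing as unresolved. That is precisely where the content of the theorem lies (in the paper it occupies Lemma~\ref{symmetry of T} and Propositions~\ref{prop-sign}, \ref{T-tunnel-prop} and \ref{count of U-tunnel}); without a worked local model at a single binding circle and a single Dehn twist, the term-matching is a consistency check, not a proof.
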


\begin{remark}\label{remark by Matt}
For a generic $M_{(S, \phi)}$ with $S$ planar, 
the formula (\ref{sl-formula}) is independent of choice of homology classes by the following reason:
Suppose $[\Sigma], [\Sigma'] \in H_2(M, b; \Z)$. 
We have
$sl(b, [\Sigma]) - sl(b, [\Sigma']) = -\langle
e(\xi), [\Sigma] - [\Sigma'] \rangle,$
where $e(\xi) \in H^2(M;\Z)$ is the Euler class for the $2$-plane bundle $\xi$ and $[\Sigma] - [\Sigma'] \in H_2(M;\Z)$ (see Proposition 3.5.30 of \cite{Ge} for example). 
By Poincar\'e duality along with (\ref{EO-eq}), (\ref{rewrite}), if the matrix $\{ t_{i,j}\}$ has the full rank, ${\rm rk}\{ t_{i,j}\}=r-1$, then the cohomology group $H^2(M_{(S, \phi)};\Z)$ is a torsion group. 
Hence $\langle e(\xi), [\Sigma] - [\Sigma'] \rangle=0.$ 
\end{remark}

\begin{remark}
Theorem~\ref{sl-thm} is a generalization of Bennequin's work \cite{Ben}, where he studied the case when $S=S_{0,1}=D^2$, $\phi= id$ and obtained that 
$sl(b) = -n + a_\sigma$.
Recently, Pavelescu and the author investigated the cases when $S$ is an annulus or a pair of pants in \cite{KP}. 
Our formula (\ref{sl-formula}) also extends their result. 
\end{remark}

To state a corollary, we recall the stabilization of a braid in an open book decomposition. 
\begin{definition}\label{def of stabilization} 
Let $b$ be a closed braid in an open book $(S, \phi)$ (only in this definition, we do not require planar property of $S$).
Suppose that $\lambda$ is a binding component of the open book and $p\in ((S \times \{\theta\}) \cap b)$ is a point, see Figure~\ref{stabilization-def}.
Join $p$ and a point on $\lambda$ by an arc $a \subset ((S \times \{\theta\}) \setminus b)$, which is allowed to have self-intersections.
A {\em positive $($negative$)$ stabilization of $b$ about $\lambda$ along $a$} is pulling a small braid segment containing $p$ along $a$, then adding a positive (negative) kink about $\lambda$ at the end of $a$.
\end{definition}
\begin{figure}[htpb!]
\begin{center}
\begin{picture}(300, 85)
\put(0,0){\includegraphics{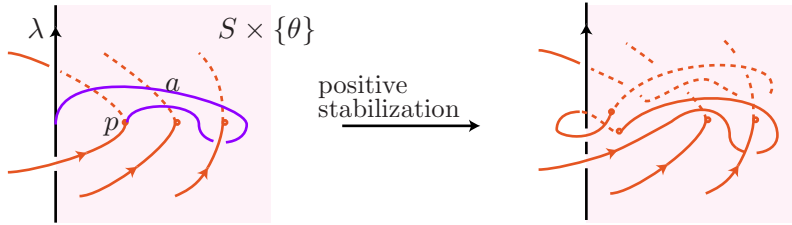}}
\put(118, 49){\small positive}
\put(118, 40){\small stabilization}
\put(60, 50){$a$}
\put(37, 35){$p$}
\put(80, 70){$S \times \{\theta\}$}
\put(8, 70){$\lambda$}
\end{picture}
\caption{The positive braid stabilization along $a$.}\label{stabilization-def}
\end{center}
\end{figure}

The following is proved in Section~\ref{proof-section}.
\begin{corollary}\label{cor-neg-stab}
A braid in $(S, \phi)$ and its negative braid stabilization are transversely  non-isotopic.
More precisely, their self linking numbers differ by $2$. 
\end{corollary}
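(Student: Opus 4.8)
The plan is to apply the self-linking formula (\ref{sl-formula}) of Theorem~\ref{sl-thm} to both the braid $b$ and its negative stabilization $b^-$ and to read off the difference. Throughout, the open book $(S,\phi)$ is unchanged, so the number of boundary components $r$ and the integers $t_{i,j}$, which depend only on $\phi$ through (\ref{rewrite}), are identical for $b$ and $b^-$. Since a braid stabilization does not alter the underlying smooth knot type of the closure in $M$, the link $b^-$ is again null-homologous and represents the same class in $H_1(M)$ as $b$; consequently the data $a_{\rho_j}$ entering the formula and the integers $s_j$ solving (\ref{a-rho-and-T}) may be taken identical for the two braids, and the two Seifert surfaces may be chosen in corresponding relative classes. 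It therefore remains to track how the three quantities $n$, $a_\sigma$, and the $a_{\rho_j}$ change.

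First I would carry out the braid-word bookkeeping. Every stabilization adds one strand, so $n \mapsto n+1$. The delicate point is that the new strand is pulled out along the arc $a$ into a neighborhood of the binding $\gamma_m$ and then capped off by a kink. Because $a$ is traversed once outward and once back, the windings it accumulates about the various $\gamma_j$ cancel in pairs, and likewise the crossings created by pushing past the other strands on the way out are undone on the way back; hence $\Delta a_{\rho_j}=0$ for every $j$, in agreement with the preservation of the homology class. The only surviving contribution is the kink itself, which for a \emph{negative} stabilization is a single negative Reidemeister-I loop and contributes $-1$ to the exponent sum of the $\sigma_i$'s, so $a_\sigma \mapsto a_\sigma - 1$ (a positive stabilization would contribute $+1$ instead). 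I would treat the case $m=1$, the binding adjacent to the marked points, first, where the picture is literally the classical Markov move, and then explain that for $m\geq 2$ the extra excursion to $\gamma_m$ changes the braid \emph{representative} while leaving $\Delta a_\sigma=-1$ and $\Delta a_{\rho_j}=0$ intact.

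With these changes in hand the computation is immediate: substituting into (\ref{sl-formula}) and using that the $t_{i,j}$ and $s_j$ are unchanged, all terms except the first two cancel in the difference, giving
\begin{equation*}
sl(b^-,[\Sigma^-]) - sl(b,[\Sigma]) = -\Delta n + \Delta a_\sigma = -1 + (-1) = -2.
\end{equation*}
Finally, since the self-linking number is an invariant of the transverse isotopy class relative to a fixed relative homology class, and since $b$ and $b^-$ have been arranged to carry corresponding classes, the equality $sl(b^-)=sl(b)-2$ forbids any transverse isotopy between them; this establishes both assertions of the corollary.

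I expect the main obstacle to be the second paragraph: making rigorous the claim that, for a stabilization about a binding $\gamma_m$ with $m\geq 2$, the outward and return portions of the excursion along $a$ contribute exactly canceling $\rho_j$- and $\sigma_i$-letters, so that the net effect is the single $\mp 1$ coming from the kink together with $\Delta a_{\rho_j}=0$. This is precisely where the knot-type preservation of stabilization must be converted into the sharp statement $\Delta a_{\rho_j}=0$, and where care is required because the arc $a$ is permitted to have self-intersections and to cross the arcs $d_i$.
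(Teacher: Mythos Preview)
Your argument is correct for stabilization about the binding $\gamma_1$, and there it coincides with the paper's. The genuine gap lies in the case $m\geq 2$, exactly where you anticipated trouble, and the resolution you sketch does not go through.

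Formula~(\ref{sl-formula}) is only available once the braid has been placed in the standard position of Assumption~\ref{assumption}: every strand must meet the page $S\times\{1\}$ at one of the points $x_1,\dots,x_{n+1}$ near $\gamma_1$. After a stabilization about $\gamma_k$ with $k\geq 2$, the newly created strand pierces the pages near $\gamma_k$, not near $\gamma_1$, and it must be dragged back to $x_{n+1}$ as in the proof of Proposition~\ref{positivity of s}. That dragging is a braid isotopy supported in a neighborhood of the glued page $S\times\{0\}\sim S\times\{1\}$; in the cut-open model $S\times[0,1]$ the outgoing portion of the new strand near $t=1$ follows an arc $\kappa'$, while the returning portion near $t=0$ follows $\phi(\kappa')$. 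These do \emph{not} cancel: the net contribution of the new strand in $H_1(S)$ is $[d_k]-\phi_*[d_k]=\sum_j t_{k,j}[\gamma_j]$, so in fact $\Delta a_{\rho_j}=t_{k,j}$ and, via (\ref{def of s_j}), $\Delta s_k=1$. Your inference ``$[b^-]=[b]$ in $H_1(M)$, hence $\Delta a_{\rho_j}=0$'' conflates $H_1(M)$ with $H_1(S)$: the class $\sum_j t_{k,j}[\gamma_j]$ is precisely one of the relations (\ref{EO-eq}) defining $H_1(M)$, so it vanishes there but not in $H_1(S)$, and it is the $H_1(S)$--level quantity that the braid word records. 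Since $a_{\rho_j}$ is the image of the braid in the abelianization of the surface braid group, no alternative braid word for $b^-$ satisfying Assumption~\ref{assumption} can have $\Delta a_{\rho_j}=0$. A further casualty is $\Delta a_\sigma$: once the extra strand sits at $x_{n+1}$, each old $\rho_k$--letter (winding of the former last strand) must be rewritten as $\sigma_n\rho_k\sigma_n$ in the $(n{+}1)$--strand alphabet, so $\Delta a_\sigma=-1+2a_{\rho_k}$ rather than $-1$.

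The paper carries out the honest bookkeeping with these corrected increments and shows that the many extra terms cancel to $-2$; the cancellation hinges on the symmetry $t_{i,j}=t_{j,i}$ of Lemma~\ref{symmetry of T} together with the defining relation (\ref{a-rho-and-T}). So although your shortcut lands on the right number, it does so by applying the formula to data that do not describe $b^-$.
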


As for a positive braid stabilization, it preserves the transverse knot type of any braid (see \cite{P}, for example).
In fact, a similar calculation in the proof of Corollary~\ref{cor-neg-stab} verifies that  our self-linking formula (\ref{sl-formula}) is invariant under a positive braid stabilization.

\section{Bennequin surface and $\A$-annulus}

Given a braid word for $b$ in $\{\sigma_1, \cdots, \sigma_{n-1}, \rho_2, \cdots, \rho_r\}$, we construct a Seifert surface $\Sigma$ step by step. 
Our construction is a generalization of that in \cite{KP}, where annulus and pants open book decompositions are studied.

First, consider $n$ copies of disks $\delta_1, \cdots, \delta_n$, whose centers are pierced by $\gamma_1$ and the boundary $\partial \delta_i$ is the unknot braid $(\{x_i \} \times [0, 1]/\sim) \subset M_{(S, \phi)}$. 
See Figure~\ref{D-disk}.
%
\begin{figure}[htpb!]
\begin{center}
\psfrag{G}{$\gamma$}
\psfrag{D}{$\delta$}
\psfrag{s}{$\sigma$}
\psfrag{t}{$\sigma_{i+1}^{-1}$}
\psfrag{x}{$x$}
\psfrag{1}{$\scriptstyle1$}
\psfrag{2}{$\scriptstyle2$}
\psfrag{3}{$\scriptstyle3$}
\psfrag{r}{$\scriptstyle r$}
\psfrag{i}{$\scriptstyle i$}
\psfrag{j}{$\scriptstyle i+1$}
\psfrag{n}{$\scriptstyle n$}
\includegraphics[width=.6\textwidth]{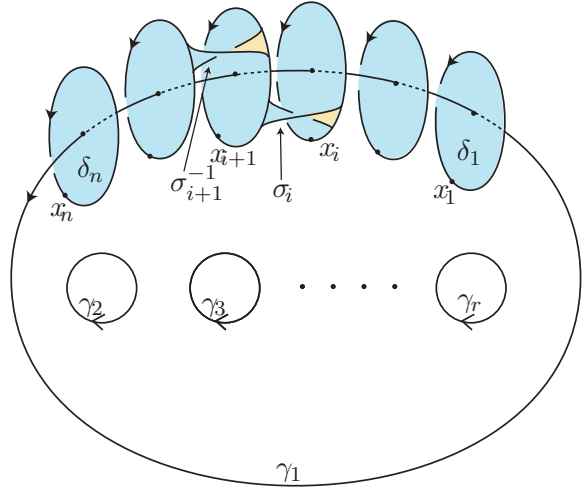}
\end{center}
\caption{$\delta$-disks and twisted bands.}
\label{D-disk}
\end{figure}
Here the fact that $\phi = id$ near $\gamma_1$ is essential.
The orientation of $\delta_i$ is induced by the braid.
The characteristic foliation on $\delta_i$ has one positive elliptic singularity at the center.

Second, for each $\sigma_i$ (resp. $\sigma_i^{-1}$) in the braid word of $b$, we attach a positively (resp. negatively) twisted band between $\delta_i$ and $\delta_{i+1}$.
The characteristic foliation on the twisted band has one positive (resp. negative) hyperbolic singularity.

If the braid word does not contain any $\rho_j$ then this surface is the desired Seifert surface, known as the Bennequin surface~\cite{Ben}.
We call it $\Sigma$.

\subsection{Surface $\tilde \Sigma_b$ }\label{A-annulus-section}
\qquad  \newline
In the following, we assume that the braid word contains $\rho_j$s.
Since careful investigation of the characteristic foliation will be required later, here we orient the leaves of the characteristic foliation, cf. \cite[page 80]{OS}. Let $F$ be an oriented surface in a contact manifold . For $p \in F$, a nonsingular point of a leaf $L$ of the characteristic foliation, let $\vec{n}\in T_p F$ be a positive normal vector to $\xi_p$. 
We choose a tangent vector $\vec{v} \in T_pL$ so that $\{\vec{v}, \vec{n}\}$ is a positive basis of $T_p F$.
This vector field $\vec{v}$ determines the orientation of the leaf $L$.

In Section~\ref{glueing-section} below, we will alter the contact structure in $S \times [0, \varepsilon]$ so that we can glue $S\times\{0\}$ and $S\times \{1\}$ smoothly by the monodromy $\phi$. 
However for the moment, we assume homogeneity of the characteristic foliation:

\begin{assumption}\label{foliation-assumption}
(1) 
Following Thurston-Winkelnkemper's idea \cite{TW}, we may assume that each page $S\times \{t\}$ of the open book has the characteristic foliation as illustrated in Figure~\ref{foliation-of-S}.
The contact planes are positively tangent to the page along the dashed circle.
All the hyperbolic points between the consecutive $\gamma$-circles have positive sign.
The contact planes and bindings intersect positively. 

(2)
The Dehn twist curves $\alpha_{i, j}$ are in the region enclosed by the dashed line.

(3)
By braid isotopy, we may assume that the braid segment for $\rho_j$ projected to $S \times \{1\}$ is mostly inside the dashed circle and does not enclose the hyperbolic points between $\gamma$-circles.
See Figure \ref{foliation-of-S}. 
\end{assumption}

\begin{figure}[htpb!]
\begin{center}
\psfrag{+}{$\rho_2$}
\psfrag{-}{$\rho_3^{-1}$}
\psfrag{1}{$\gamma_1$}
\psfrag{2}{$\gamma_2$}
\psfrag{3}{$\gamma_3$}
\psfrag{r}{$\gamma_r$}
\psfrag{n}{$x_n$}
\includegraphics[width=.6\textwidth]{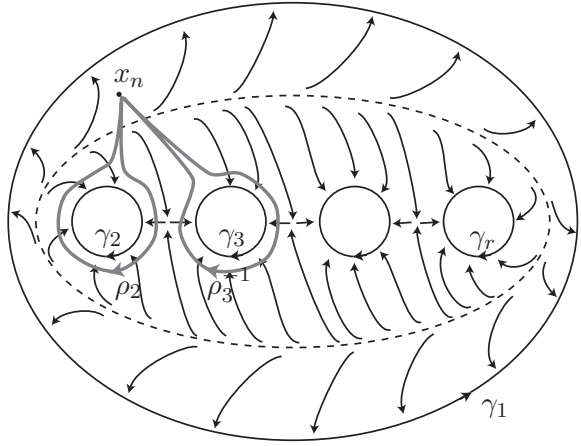}
\end{center}
\caption{The characteristic foliation on $S$.  }
\label{foliation-of-S}
\end{figure}

Now we introduce the $\mathfrak A$-annulus.

\begin{definition}(\textbf{Annulus $\A$})
For each $\rho_j$ (resp. $\rho_j^{-1}$) in the braid word of $b$ we attach an oriented annulus, called an $\mathfrak A$-{\em annulus}, to $\delta_n$.
See Figure~\ref{A-annulus}.
%
\begin{figure}[htpb!]
\begin{center}

\psfrag{d}{$\delta_n$-disk}
\psfrag{g}{$\gamma_j$}
\psfrag{r}{$\rho_j$}
\psfrag{R}{$\rho_j^{-1}$}
\psfrag{b}{braid}
\psfrag{c}{$c$}
\psfrag{A}{$\mathfrak A$-annulus}
\psfrag{+}{$\mathfrak A^+$}
\psfrag{-}{$\mathfrak A^-$}

\includegraphics[width=.9\textwidth]{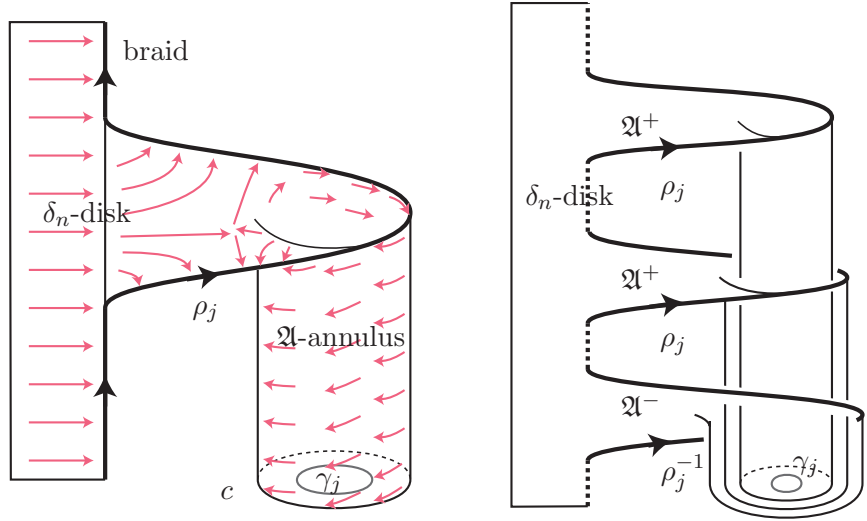}
\end{center}
\caption{(Left) the characteristic foliation on a positive $\mathfrak A$-annulus in $S \times [0,1]$ has a positive hyperbolic point.
(Right) three $\mathfrak A$-annuli near binding $\gamma_j$.}
\label{A-annulus}
\end{figure}
An $\mathfrak A$-annulus associated to $\rho_j$ (resp. $\rho_j^{-1}$) is called {\em positive} (resp. {\em negative}) and denoted by $\A^+$ (resp. $\A^-$).
The boundary consists of 
\begin{description}
\item[(A)] a part of the braid representing $\rho_j$ (resp. $\rho_j^{-1}$), 
\item[(B)] a part of the boundary of disk $\delta_n$ where the $\mathfrak A$-annulus is glued to, and
\item[(C)] a circle $c$ in the page $S\times \{0\}$ once around $\gamma_j$.
\end{description}
\end{definition}
As sketched in Figure~\ref{A-annulus}, an $\mathfrak A^+$-annulus  (resp. $\A^-$) has one positive (resp. negative) hyperbolic singularity in the characteristic foliation.
Along part (A, C) (resp. (B)) of the boundary, the characteristic foliation is outward (resp. inward).  
Around each binding $\gamma_i$ ($i=2, \cdots, r$), the $\mathfrak A$-annuli are standing up disjointly and parpendicular to $S\times \{0\}$.

The orientation of the braid $b$ induces the orientation of each $\mathfrak A$-annulus and its $c$-circle. 
If a $c$-circle is the boundary of a positive $\A$-annulus (i.e., clockwise oriented), then we call it a {\em positive} $c$-circle. 
With this orientation, the algebraic count of the concentric $c$-circles around $\gamma_j$ is equal to $a_{\rho_j}$.

We number the $c$-circles $c_1, c_2, \cdots,$ around $\gamma_j$ ($j=2, \cdots, r$) from the innermost one. 
Suppose that $c_1, c_2, \cdots, c_k$ have the same sign but $c_k$ and $c_{k+1}$ have opposite signs. 
We connect the two $\mathfrak A$-annuli associated to $c_k$ and $c_{k+1}$ by identifying $c_k, c_{k+1}$ then round the corner. 
This operation does not create new singularities in the characteristic foliation.
The orientations of the $\mathfrak A$-annuli match when they are glued.
We push up the bottom of the glued $\A$-annuli so they are contained in $S \times [\epsilon, 1]$.

Renumber the remaining $c$-circles $c_1, c_2, \cdots,$ to which we apply the same procedure above.
After finitely many times, all the $c$-circles around $\gamma_j$ have the same sign, and there are $|a_{\rho_j}|$ of them. 
See Figure~\ref{cancelation}.
%
\begin{figure}[htpb!]
\begin{center}

\psfrag{1}{$c_1$}
\psfrag{2}{$c_2$}
\psfrag{3}{$c_3$}
\psfrag{4}{$c_4$}
\psfrag{5}{$c_5$}
\psfrag{g}{$\gamma_j$}
\psfrag{i}{identify}
\psfrag{c}{$c_2$ and $c_3$}

\includegraphics[width=1\textwidth]{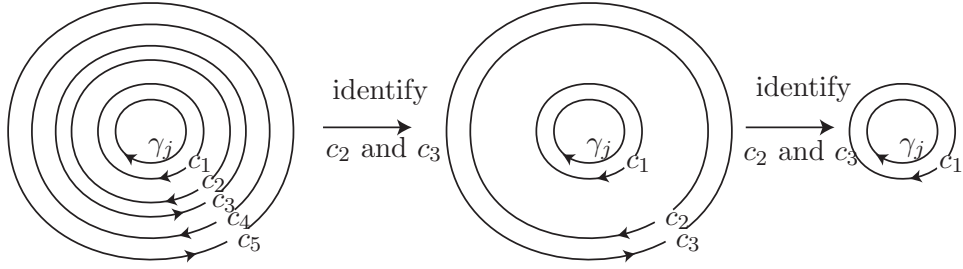}
\end{center}
\caption{Construction of $\tilde \Sigma_b$.}
\label{cancelation}
\end{figure}
We have obtained a smooth surface embedded in $M_{(S, \phi)}$, which we denote by $\tilde \Sigma_b$. 
Its oriented boundary consists of the braid $b$ and $|a_{\rho_j}|$ copies of $c$-circles of ${\rm sgn}(a_{\rho_j})$ around $\gamma_j$ for each $j=2, \cdots, r$.

\section{Construction of Seifert surface $\Sigma$}\label{construction-section}

In this section, we construct surfaces $\Sigma_0, \Sigma_1, \cdots, \Sigma_l \subset S\times [0,1]$ inductively from $s_i$ copies of rectangle $\mathcal D_i$ (defined shortly) and $\tilde\Sigma_b$ constructed in Section~\ref{A-annulus-section}. 
About the integer $s_i$ we assume:

\begin{proposition}\label{positivity of s} 
We may assume that $s_i\geq 0$ for $i=2, \cdots, r$.
\end{proposition}

The purpose of the assumption is that we want the surface $\Sigma_l$ satisfying 
$$
(s_i \mbox{ copies of } \phi(d_i)) \subset \Sigma_l \cap (S\times \{0\})\mbox{ and }
(s_i \mbox{ copies of } d_i) \subset \Sigma_l \cap (S\times \{1\})
$$
for each $i=2, \cdots, r,$ so that the arcs can be identified under the monodromy $\phi$, which in Section~\ref{glueing-section} yields our desired Seifert surface $\Sigma$ for $b$.

\begin{proof}

Recall that a positive braid-stabilization (Definition~\ref{def of stabilization}) about any binding component preserves the transverse isotopy class of the braid.

As in Figure~\ref{stabilization}, put a point $x_{n+1}$ between $\gamma_1$ and $\alpha_1$ on the left side of $x_n$ and put a point $\nu$ between $\gamma_i$ and $\alpha_i$.
\begin{figure}[htpb!]
\begin{center}
\psfrag{G}{$\gamma$}
\psfrag{d}{$d$}
\psfrag{N}{$\nu$}
\psfrag{x}{$x$}
\psfrag{i}{$\scriptstyle i$}
\psfrag{m}{$x_{n+1}$}
\psfrag{a}{$\alpha$}
\psfrag{1}{$\scriptstyle1$}
\psfrag{2}{$\scriptstyle 2$}
\psfrag{3}{$\scriptstyle 3$}
\psfrag{4}{$\scriptstyle 4$}
\psfrag{r}{$\scriptstyle r$}
\psfrag{k}{$\kappa$}
\psfrag{l}{$\kappa'$}
\psfrag{n}{$x_n$}
\includegraphics[width=.6\textwidth]{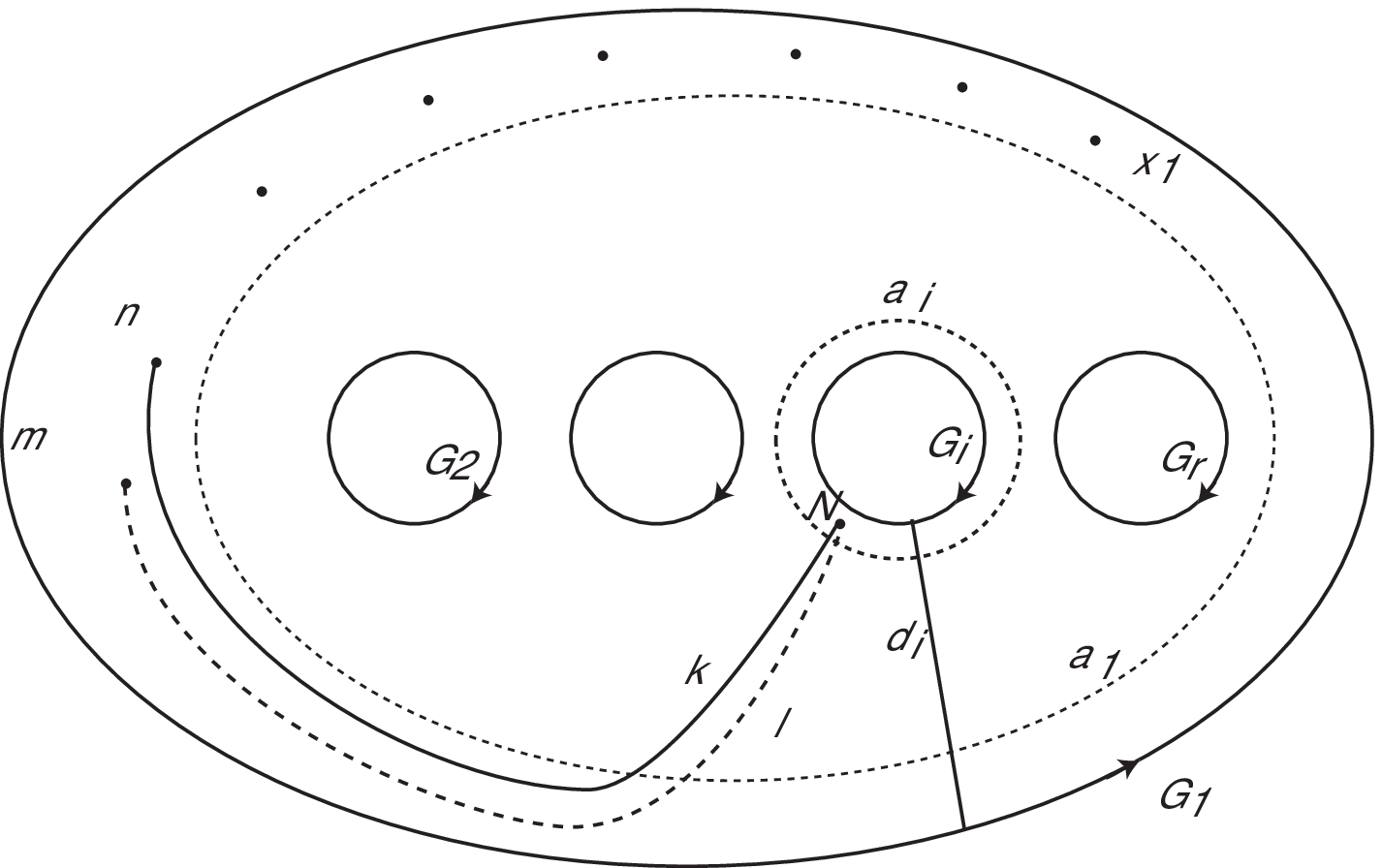}
\end{center}
\caption{}
\label{stabilization}
\end{figure}
We apply a positive braid stabilization about $\gamma_i$ to a small line segment of the $n$-th strand of $b$ along the arc $\kappa$ joining $x_n$ and $\nu$ as depicted in Figure~\ref{stabilization}.
Now $b$ is $(n+1)$-stranded, and the $(n+1)$-th strand intersects the page $S \times \{1\}$ in the point $\nu$.
Note that the stabilized braid does not satisfy Assumption~\ref{assumption} because $\nu$ is away from $\gamma_1$. 
Using braid isotopy supported in $S \times [ - \epsilon, \epsilon]$ we drag $\nu$ back to $x_{n+1}$ along the dashed arc $\kappa'$ as in Figure~\ref{stabilization} so that Assumption~\ref{assumption} is satisfied. 
As a consequence, the dragging operation replaces the braid segment $\{\nu\}\times (1-\epsilon, 1)$ with a copy of $\kappa'$ embedded in $S \times (1-\epsilon, 1)$, and it also replaces $\{\nu\}\times (0,\epsilon)$ with a copy of $\phi(\kappa')$ in $S\times (0, \epsilon)$.
We denote the resulting braid by $b^+$.
In the homology group $H_1(S\times [-\epsilon,\epsilon];\Z)=H_1(S;\Z)$, we have 
\begin{equation}\label{difference}
[b^+ \setminus b] =[d_i]-\phi_*[d_i].
\end{equation}
Hence by (\ref{rewrite}) and (\ref{def of s_j}), the positive stabilization along $\kappa$ imposes an increase of $s_i$ by $1$, 
whereas other $s_j$ ($j\neq i$) remain the same.

Moreover, it turns out that the equality (\ref{difference}) holds regardless of the choice of stabilization arc. 
Suppose that $\iota$ (resp. $\iota'$) is another arc joining $x_n$ (resp. $x_{n+1}$) and $\nu$.
Then 
$$
[\kappa \cup \phi(-\kappa)]- [\iota \cup \phi(-\iota)] =
[\kappa - \iota] - [\phi(\kappa - \iota)] = 0 
\quad \mbox{ in } H_1(S;\Z),
$$
since $\kappa-\iota$ is a closed curve and $\phi_*[\gamma_j] = [\gamma_j]$ for any $j=2, \cdots, r$.

Overall, after sufficiently many positive stabilizations about $\gamma_i$, $s_i$ becomes non-negative.
\end{proof}

\begin{definition}\label{def-rectangle-D}
(\textbf{Rectangle $\mathcal D$})
For $j=2, \cdots, r$, we denote the rectangle $d_j \times [0,1]$ in $S\times [0,1]$ by $\mathcal D_j$.
We orient $\mathcal D_j$ so that $\gamma_j$ and $\mathcal D_j$ intersect positively.
\end{definition}

Recall that $\phi=  {(A_{i_l, j_l})}^{\epsilon_l} \cdots (A_{i_1, j_1})^{\epsilon_1} (A_r)^{k_r} \cdots (A_2)^{k_2}$ where
$\epsilon_i \in \Z \setminus\{0\},\  k_i  \in \Z.$ 
Let $\phi_0 =  (A_r)^{k_r} \cdots (A_2)^{k_2}.$ For $m=1, \cdots, l,$  define $\phi_m = {(A_{i_m, j_m})}^{\epsilon_m} \phi_{m-1}$ inductively.
In particular, $\phi_l=\phi$.

\subsection{Surface $\Sigma_0$}\label{Sigma0-section}

\qquad \newline
We cut open the surface $\tilde \Sigma_b \subset M_{(S, \phi)}$ by the page $S\times \{1\}$, and call it $\tilde\Sigma_b \subset S\times [0,1]$ using the same notation.

If $k_j a_{\rho_j}=0$ for all $j=2, \cdots, r$, then define an immersed surface $\Sigma_0 \subset S \times [0,1]$ by
$\Sigma_0 := \tilde \Sigma_b \cup (s_2 \mbox{ copies of } \mathcal D_2) \cup \cdots \cup (s_r \mbox{ copies of } \mathcal D_r).$
Recall that Proposition~\ref{positivity of s} guarantees $s_j\geq0$.

Suppose $k_j a_{\rho_j} > 0$ for some $j$.
Each $\mathcal D_j$ intersects every $\mathfrak A^{\pm}$-annulus around $\gamma_j$ transversely in a simple curve, whose one end sits on the $\rho_j^{\pm}$-curve, a part of the braid $b$, and the other end sits on the page $S\times \{0\}$. 
See the left sketches in Figure~\ref{resolution0}.
%
\begin{figure}[htpb!]
\begin{center}
\psfrag{-}{$\scriptstyle -$}
\psfrag{+}{$\scriptstyle +$}
\psfrag{g}{$\gamma_j$}
\psfrag{D}{$\mathcal D_j$}
\psfrag{A}{$\mathfrak A^+$}
\psfrag{a}{$\mathfrak A^-$}
\psfrag{b}{$\rho_j$}
\psfrag{B}{$\rho_j^{-1}$}

\includegraphics[width=.8\textwidth]{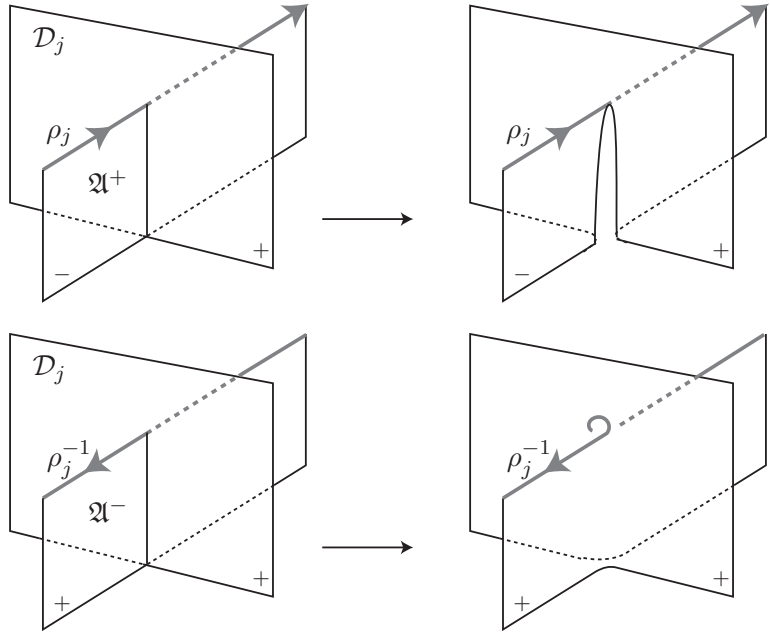}
\end{center}
\caption{We cut along the intersection between a $\mathcal D_i$-rectangle and $\mathfrak A$-annuli and re-glue so that the orientations match.  }
\label{resolution0}
\end{figure}
Consider the intersection curves between $s_j$ copies of $\mathcal D_j$ and the $s_j |k_j|$ innermost $\mathfrak A$-annuli around $\gamma_j$.
Cut them open along the intersections and re-glue so that the orientations of $\mathcal D_j$s and $\mathfrak A$-annuli match. 
See Figure~\ref{resolution0}.
As a consequence, since the signs of $k_j$, $a_{\rho_j}$ and the $\A$-annuli are the same, each arc $d_j= \mathcal D_j \cap (S \times \{0\})$ is replaced with $\phi_0 (d_j) = (A_j)^{k_j} (d_j)$.
See Figure~\ref{resolution1}.
%
\begin{figure}[htpb!]
\begin{center}

\psfrag{+}{$\scriptstyle +$}
\psfrag{g}{$\gamma_j$}
\psfrag{D}{$d_j$}
\psfrag{d}{$A_j^{k_j} d_j$}

\includegraphics[width=.9\textwidth]{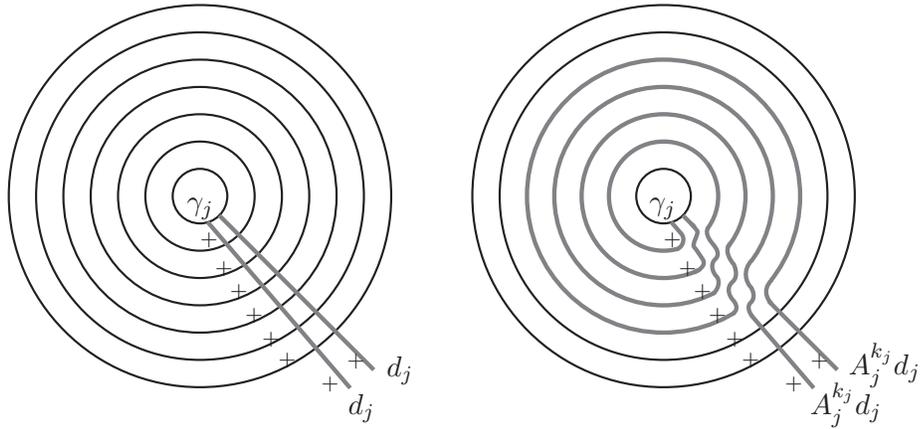}
\end{center}
\caption{After cut and re-glue operation, the arc $d_j$ is replaced by $(A_j)^{k_j} d_j$, where $a_{\rho_j}=6, s_j=2, k_j=2.$ The positive sides of  the surfaces are indicated by ``$+$''.}
\label{resolution1}
\end{figure}

Suppose $k_j a_{\rho_j} < 0$ for some $j$.
By braid isotopy, we deform braid from $b$ to 
$$b (\rho_j)^{-k_j s_j} (\rho_j)^{k_j s_j} \quad \mbox{(read from the left to the right)}$$
which introduces new $2 |k_j| s_j$ innermost {\em dummy} $\mathfrak A$-annuli  around $\gamma_j$. 
Apply the same cut and re-glue operations as in the above paragraph to $s_j$ copies of $\mathcal D_j$ and the $s_j |k_j|$ innermost $\A$-annuli of ${\rm sgn}(k_j)$ so that their orientations match when glueing.
Because of the dummy $\mathfrak A$-annuli, the curve $d_j= \mathcal D_j \cap (S \times \{0\})$ is replaced with $\phi_0 (d_j)= (A_j)^{k_j} (d_j)$.

In either case, starting from $s_j$ copies of $\mathcal D_j$ and $\tilde\Sigma_b$, we have obtained an immersed surface, $\Sigma_0$, in $S\times [0,1]$.
The boundary of $\Sigma_0$ consists of 
\begin{eqnarray*}
\partial \Sigma_0 &=& 
b \cup ((\delta_1 \cup \cdots \cup \delta_n) \cap (S\times \{0\})) 
\cup 
((\delta_1 \cup \cdots \cup \delta_n) \cap (S\times \{1\}))\\
&& 
\cup \ 
((d_2 \cup \cdots \cup d_r) \times \{1\}) 
\ \cup \ 
(\phi_0 (d_2 \cup \cdots \cup d_r) \times \{0\}) 
\\
&& \cup \ (c\mbox{-circles around } \gamma_2, \cdots, \gamma_r),
\end{eqnarray*}
here $d_j \times \{1\}$ and $\phi_0(d_j)\times \{0\}$ mean the $s_j$ copies of each.

For later purpose, we investigate the characteristic foliations under the cut and re-glue operations. 
See Figure~\ref{resolution-2}.
%
\begin{figure}[htpb!]
\begin{center}
\begin{picture}(370, 144)
\put(0,0){\includegraphics{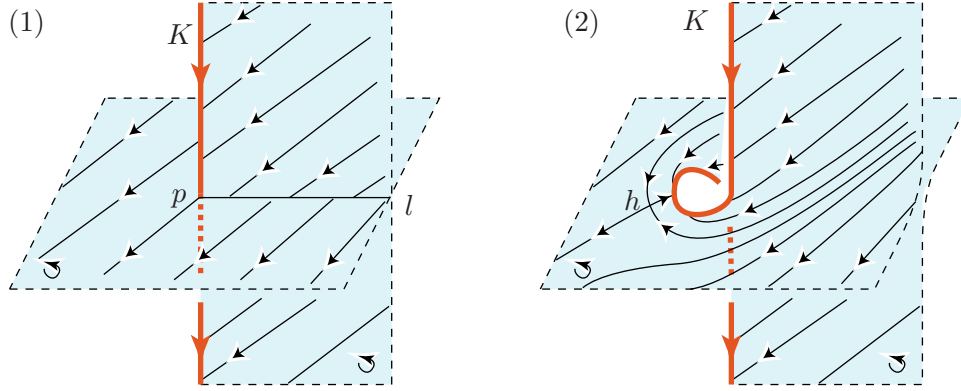}}
\put(0, 135){(1)}
\put(210, 135){(2)}
\put(150, 65){$l$}
\put(62, 70){$p$}
\put(60, 130){$K$}
\put(233, 67){$h$}
\put(255, 135){$K$}
\end{picture}
\caption{(1) A negative intersection $p$. (2) Creation of a negative hyperbolic singularity $h$ by resolving the singular arc $l$.}\label{resolution-2}
\end{center}
\end{figure}
It is a local picture of self-intersection $l$ of an oriented surface and its  characteristic foliation.
The boundary $K$ is a transverse knot intersecting the surface at point $p$ negatively. 
On $l$, we assume that there is no singularity in the characteristic foliation. 
Our cut and re-glue operation creates a hyperbolic singularity.
The signs of the new hyperbolic points are determined in the following way:

\begin{proposition}\label{prop-sign}
\cite[Proposition 3.8]{KP}
If $p$ is a positive $($negative$)$ transverse intersection of $K$ and the surface, then the new hyperbolic point has positive $($negative$)$ sign.
\end{proposition}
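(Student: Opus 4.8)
The plan is to reduce the statement to a purely local computation in a neighborhood of $p$ and to compute both signs directly. Both quantities are local: the sign of $p$ is determined by the relative orientation of $K$ and the pierced sheet $\mathcal D_j$ together with the ambient orientation, while the sign of $h$ is determined by the relative orientation of the resolved surface and the contact plane at the tangency. Hence it suffices to analyze the configuration inside a single Darboux chart, and by symmetry I would treat the positive and negative cases in parallel, so that exhibiting the agreement of signs in one case gives the other by reversing orientations.

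First I would fix Darboux coordinates $(x,y,z)$ in which $\xi=\ker(dz-y\,dx)$. Then the Reeb direction $\partial_z$ furnishes the positive co-orientation of $\xi$, and the ambient orientation is the standard one. In these coordinates I would place the two sheets of the surface so that they meet transversally along $l$ with no singularity of the characteristic foliation on $l$, as assumed, with $K\subset\partial F$ positively transverse to $\xi$ and piercing the sheet $\mathcal D_j$ transversally at $p$. The sign ${\rm sgn}(p)$ is then read off from whether $(T_pK,\,T_p\mathcal D_j)$ is a positively or negatively oriented ambient frame, using that $T_pK$ has a positive $\partial_z$-component because $K$ is positively transverse to $\xi$.

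Next I would carry out the oriented resolution explicitly. Cutting the two sheets along $l$ and re-gluing so that the orientations match replaces a neighborhood of $l$ by a standard saddle surface; since the re-gluing rule is dictated by orientation-compatibility, the resulting sheet is pinned down up to isotopy. I would then locate the unique point $h$ at which the tangent plane of the resolved surface coincides with $\xi_h$, and compute ${\rm sgn}(h)$ by comparing the surface orientation at $h$ with the co-orientation $\partial_z$. The key observation is that the rotation of the tangent plane away from $T_p\mathcal D_j$ toward $\xi_h$ forced by the resolution is governed by exactly the same orientation data as ${\rm sgn}(p)$, precisely because $K$ meets $\xi$ positively; tracking this coupling through both cases yields ${\rm sgn}(h)={\rm sgn}(p)$.

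The step I expect to be the main obstacle is the orientation bookkeeping. One must check that the instruction ``cut and re-glue so that the orientations match'' is realized consistently by a single local normal form, and that the tangency thereby created is a genuine hyperbolic (saddle) point of the asserted sign rather than an elliptic point or a cancelling pair. Getting the interaction between the re-gluing rule and the co-orientation of $\xi$ exactly right --- so that the positivity of $K$ relative to $\xi$ transfers the sign of $p$ to $h$ --- is the delicate point; once the normal form is in place everything else is routine. This is essentially the computation carried out in \cite[Proposition 3.8]{KP}.
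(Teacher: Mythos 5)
The paper does not actually prove this proposition: it is imported verbatim from \cite[Proposition~3.8]{KP}, so the only ``proof'' in the text is that citation. Your strategy---observe that both ${\rm sgn}(p)$ and ${\rm sgn}(h)$ are purely local orientation data, pass to a Darboux chart, perform the oriented resolution in a normal form, and read off the sign of the resulting tangency by comparing the surface orientation with the co-orientation of $\xi$---is the right one, and it is essentially the argument carried out in \cite{KP}.

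That said, what you have written is a plan rather than a proof. The entire content of the proposition is the step you defer: that the cut-and-reglue dictated by orientation-matching produces, in the local model, exactly one tangency of the resolved surface with $\xi$; that this tangency is a hyperbolic (saddle) point of the characteristic foliation rather than an elliptic point or a cancelling pair; and that its sign coincides with ${\rm sgn}(p)$. You correctly flag this as the delicate point and then cite \cite[Proposition~3.8]{KP} for it, which reduces your argument to the same citation the paper already makes rather than an independent verification. One further caution: your proposed symmetry reduction (obtain the negative case from the positive one ``by reversing orientations'') needs care, because $K$ must remain positively transverse to $\xi$ throughout; the orientation you are free to reverse is that of the pierced sheet, and reversing it changes the re-gluing instruction itself---hence the local model of the resolved surface---not merely the orientation of a fixed surface. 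None of this reflects a wrong idea, but the computation that actually carries the proposition is not performed.
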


\subsection{Surfaces $\Sigma_1, \cdots, \Sigma_l$}\label{Sigmal-section}

\qquad \newline
Suppose that we have constructed immersed surfaces $\Sigma_0, \cdots, \Sigma_{k-1} \subset S \times [0,1]$ satisfying: 
\begin{equation}
\label{boundary-equation}
\Sigma_m \cap (S \times \{0\}) =
((\delta_1 \cup \cdots \cup \delta_n) \cap (S\times \{0\})) 
\ \cup \
\phi_m (d_2 \cup \cdots \cup d_r) \ 
\cup \ (c\mbox{-circles}).
\end{equation}
Here $\phi_m(d_j)$ means its $s_j$ copies, because we started the construction of $\Sigma_0$ from $s_j$ copies of $\mathcal D_j$.
We deform $\Sigma_{k-1}$ to obtain $\Sigma_k$ by applying the following two kinds of surgery.

\noindent
(\textbf{Surgery 1})
Recall that $\phi_k = {(A_{i_k, j_k})}^{\epsilon_k} \phi_{k-1}$.
Suppose $a \subset \phi_{k-1}(d_{i_k})$ and $a' \subset \phi_{k-1}(d_{j_k})$ are sub-arcs which have geometric intersection 
$$
i(a, \alpha_{i_k, j_k})=i(a', \alpha_{i_k, j_k})=1
$$ 
and end at $\gamma_{i_k}$ and $\gamma_{j_k}$ respectively.
See Figure~\ref{T-tunnel}-(1).
Note that there exist $s_{i_k}$ ($s_{j_k}$) parallel copies of $a$ ($a'$) satisfying such conditions. 
%
\begin{figure}[htpb!]
\begin{center}

\psfrag{1}{(1)}
\psfrag{2}{(2)}
\psfrag{3}{(3)}
\psfrag{4}{(4)}
\psfrag{5}{(5)}
\psfrag{i}{$\gamma_{i_k}$}
\psfrag{j}{$\gamma_{j_k}$}
\psfrag{A}{$\alpha_{i_k, j_k}$}
\psfrag{a}{$a$}
\psfrag{b}{$a'$}
\psfrag{C}{$c$-circles}
\psfrag{S}{$\Sigma_{k-1}$}
\psfrag{+}{$\scriptstyle +$}
\psfrag{T}{$T$-tunnels}
\psfrag{t}{$T$-tunnel}
\psfrag{U}{$\A$}
\psfrag{N}{nested $T$-tunnels}
\includegraphics[width=1\textwidth]{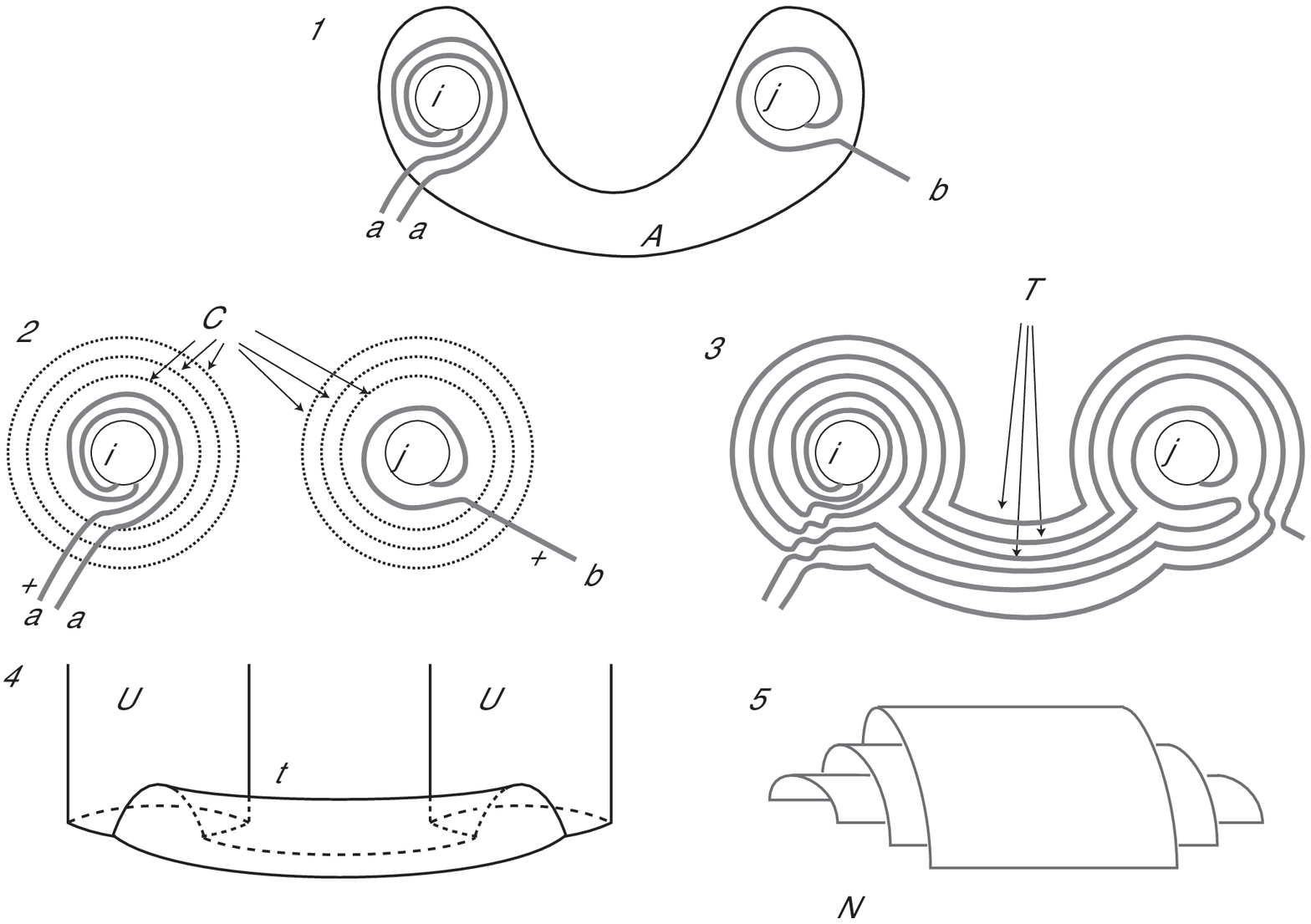}
\end{center}
\caption{
[Surgery 1] 
(1) 
Arcs $a, a'$ have geometric intersection number $1$ with $\alpha_{i_k, j_k}$, where $s_{i_k}=2, s_{j_k}=1$. \\
(2)
On the page $S\times \{0\}$ we reserve $(s_{i_k}+ s_{j_k}) |\epsilon_k|$ many $c$-circles around each $\gamma_{i_k}$ and $\gamma_{j_k}$, where $\epsilon_k=1$.
The sign of the $c$-circles is $\rm{sgn}(\epsilon_k)$.\\
(3) 
Join $\A$-annuli by disjointly nested $(s_{i_k}+ s_{j_k}) |\epsilon_k|$ many $T$-tunnels. 
The arcs $a, a'$ are replaced by $(A_{i_k, j_k})^{\epsilon_k}a$ and $(A_{i_k, j_k})^{\epsilon_k}a'$.}
\label{T-tunnel}
\end{figure}

By inserting dummy $\A$-annuli around $\gamma_{i_k}$ and/or $\gamma_{j_k}$, if necessary, as we did in Section~\ref{Sigma0-section}, we reserve the innermost $(s_{i_k}+ s_{j_k}) |\epsilon_k|$ many $\A$-annuli around each $\gamma_{i_k}$ and $\gamma_{j_k}$ (Figure~\ref{T-tunnel}-(2)), so that the sign of $\epsilon_k$ and the signs of these $\A$-annuli coincide. 
We cut open the intersections of $\Sigma_{k-1}$ and these $\A$-annuli and re-glue as in Figure~\ref{resolution0}.
There are $(s_{i_k}+ s_{j_k})^2 |\epsilon_k|$ intersection curves.
Further, we connect $(s_{i_k}+ s_{j_k}) |\epsilon_k|$ many $\A$-annuli around $\gamma_{i_k}$ and $(s_{i_k}+ s_{j_k}) |\epsilon_k|$ many $\A$-annuli around $\gamma_{j_k}$ by disjointly nested tunnels from the outermost pairs.
We call such tunnels {\em $T$-tunnels}.
See the passage (2)$\Rightarrow$(3) and (4, 5) in Figure~\ref{T-tunnel}.
Round the corners where the $T$-tunnels and the $\A$-annuli meeting. 
Orientation of the $T$-tunnel is induced by the orientation of the $\A$-annuli.
We observe that the arc $a$ (resp. $a'$) is replaced by $(A_{i_k, j_k})^{\epsilon_k}a$ (resp. $(A_{i_k, j_k})^{\epsilon_k}a'$).

\begin{proposition}\label{T-tunnel-prop}
After perturbation, a $T$-tunnel has one hyperbolic singularity in the characteristic foliation. The sign of the hyperbolic point is $- {\rm sgn}(\epsilon_k)$.
\end{proposition}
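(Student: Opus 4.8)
The plan is to localize the statement to a small contact ball containing the bridge of the $T$-tunnel, compute the characteristic foliation there, and then pin down the sign by comparing the orientation of the tunnel with the contact orientation. First I would fix a Thurston--Winkelnkemper contact form $\alpha = dt + \beta$ on $S\times[0,1]$ compatible with Assumption~\ref{foliation-assumption}, so that the planes $\xi=\ker\alpha$ are nearly tangent to the pages $S\times\{t\}$ and the foliation on each page is as in Figure~\ref{foliation-of-S}. Since the two $\mathfrak{A}$-annuli being joined stand up perpendicularly to $S\times\{0\}$ near $\gamma_{i_k}$ and $\gamma_{j_k}$, and the $T$-tunnel is attached near their tops after rounding corners, the whole tunnel sits in a Darboux ball in which $\alpha$ may be put in the standard form $dz - y\,dx$. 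There the tunnel is a band running from one annulus to the other and bridging over the top, and I can take an explicit graphical model $z=f(x,y)$ for it.

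Second I would count the interior singularities. Topologically the $T$-tunnel is a disk, and its characteristic foliation along $\partial D^2$ is inherited from the two $\mathfrak{A}$-annuli to which it is glued together with the near-horizontal contact planes; this produces the outward/inward pattern of a single saddle. Concretely, in the height model of the previous paragraph the bridge is a saddle of the projection $t|_{\Sigma}$ (a band move merging the two annulus slices in the movie of slices $\Sigma\cap(S\times\{t\})$), and a saddle carries exactly one hyperbolic point and no elliptic point in the characteristic foliation after a $C^\infty$-small Morse--Smale perturbation. As a cross-check, the Poincar\'e--Hopf index of the oriented leaf field on the disk, computed from the two changes of boundary type, is $-1$, which is realized by a single hyperbolic singularity.

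Third, and this is the crux, I would determine the sign. At a singular point $p$ one has $T_p\Sigma=\xi_p$, and $p$ is positive or negative according as the orientation of $\Sigma$ agrees or disagrees with the contact orientation of $\xi_p$ given by $d\alpha$. The $\mathfrak{A}$-annuli reserved in Surgery~1 carry the sign ${\rm sgn}(\epsilon_k)$, and the tunnel is oriented so as to extend their orientations. The key observation is that, as it bridges over the top, the tunnel lies on the \emph{opposite} co-oriented side of the contact planes from the annuli, so that at the tunnel's saddle the induced orientation of $\xi_p$ is reversed relative to the annuli; hence the sign flips to $-{\rm sgn}(\epsilon_k)$. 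I would make this rigorous either by computing the sign of a saddle of the graph $z=f(x,y)$ directly in $(\R^3,\ker(dz-y\,dx))$, or by recasting the bridging as a cut-and-reglue and invoking Proposition~\ref{prop-sign}, so that the sign of the new hyperbolic point equals that of the relevant transverse intersection, which one checks to be $-{\rm sgn}(\epsilon_k)$.

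The main obstacle I anticipate is precisely this orientation bookkeeping: establishing unambiguously that the co-orientation of the tunnel is reversed relative to the annuli at the saddle, and that this reversal yields $-{\rm sgn}(\epsilon_k)$ rather than $+{\rm sgn}(\epsilon_k)$. Tracking the orientations through the rounding of corners and the induced orientation of the tunnel requires care, and I would verify the outcome against the annulus and pair-of-pants cases of \cite{KP}, where the analogous signs are already computed.
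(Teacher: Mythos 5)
Your proposal matches the paper's proof in essence: the paper likewise perturbs the tunnel's height function into a saddle (making the heights over certain marked points $p_2, p_6$ on the $c$-circles lower than over $p_3, p_5$), uses the near-horizontality of the contact planes from Assumption~\ref{foliation-assumption} to conclude there is exactly one hyperbolic point, and reads off the sign from which side of the tunnel faces the positive co-orientation of $\xi$ --- for $\epsilon_k>0$ the negative side of the tunnel faces up, so the singularity is negative. Two small caveats on your alternatives: the tunnel stretches between the bindings $\gamma_{i_k}$ and $\gamma_{j_k}$ and so does not sit in a single Darboux ball (the global page foliation of Assumption~\ref{foliation-assumption} is what the paper actually invokes), and your fallback of citing Proposition~\ref{prop-sign} would not apply here, since the bridge is not produced by a cut-and-reglue of a self-intersection ending on the braid.
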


\begin{proof}
See Figure~\ref{T-tunnel-foliation}.
Let $p_1, \cdots, p_4$ (resp. $p_5, p_6$) be points on a $c$-circle around $\gamma_{i_k}$ (resp. $\gamma_{j_k}$) which is engaged in Surgery 2.
%
\begin{figure}[htpb!]
\begin{center}

\psfrag{1}{$\scriptstyle p_1$}
\psfrag{2}{$\scriptstyle p_2$}
\psfrag{3}{$\scriptstyle p_3$}
\psfrag{4}{$\scriptstyle p_4$}
\psfrag{5}{$\scriptstyle p_5$}
\psfrag{6}{$\scriptstyle p_6$}
\psfrag{i}{$\gamma_{i_k}$}
\psfrag{j}{$\gamma_{j_k}$}
\psfrag{A}{$\A$-annulus around $\gamma_{i_k}$}
\psfrag{a}{$a$}
\psfrag{b}{$a'$}
\psfrag{C}{$c$-circles}
\psfrag{S}{$\Sigma_{k-1}$}
\psfrag{+}{$\scriptstyle +$}
\psfrag{T}{$T$-tunnel}

\includegraphics[width=.8\textwidth]{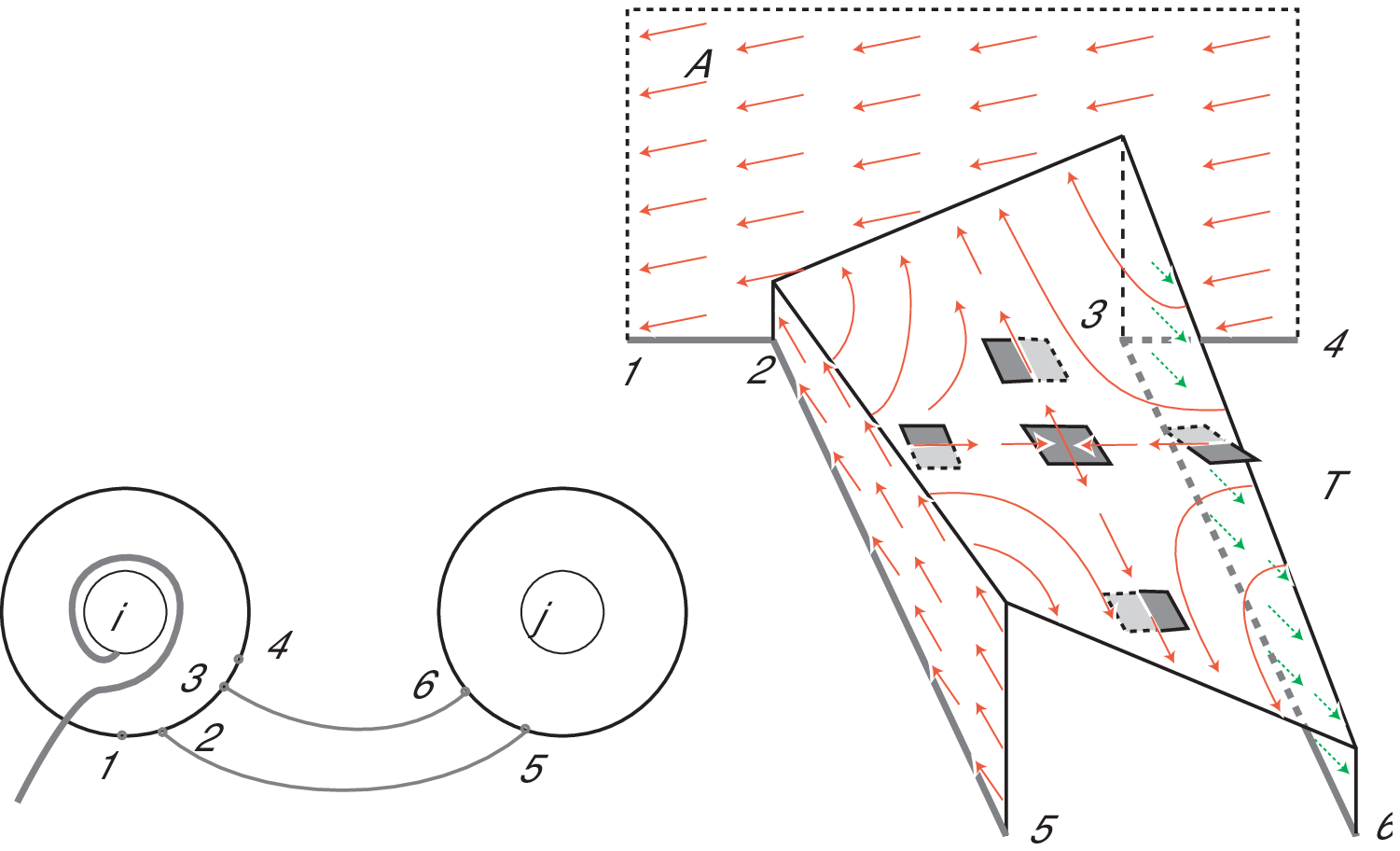}
\end{center}
\caption{The characteristic foliation and contact planes on a $T$-tunnel (the corners should be rounded) has a hyperbolic singularity. 
}
\label{T-tunnel-foliation}
\end{figure}
As sketched, we perturb the $T$-tunnel so that the heights above points $p_2$ and $p_6$ are shorter than the heights above $p_3$ and $p_5$.
By Assumption~\ref{foliation-assumption} it imposes the $T$-tunnel one hyperbolic point.
The signs of both of the $\A$-annuli are equal to the sign of $\epsilon_k$.
If ${\rm sgn}(\epsilon_k) = +$, then the negative side the $T$-tunnel is facing to us.
Hence the sign of the hyperbolic point is negative.
\end{proof}

\noindent
(\textbf{Surgery 2})
Let $a \subset \phi_{k-1} (d_2 \cup \cdots \cup d_r) \subset S \times \{0\}$ be a sub-arc that has the geometric intersection number 
$i(a, \alpha_{i_k, j_k})=2.$
See Figure~\ref{tunnel}-(1).
%
\begin{figure}[htpb!]
\begin{center}

\psfrag{i}{$\gamma_{i_k}$}
\psfrag{j}{$\gamma_{j_k}$}
\psfrag{A}{$\alpha_{i_k, j_k}$}
\psfrag{a}{$a$}
\psfrag{S}{$\Sigma_{k-1}$}
\psfrag{1}{(1)}
\psfrag{2}{(2)}
\includegraphics[width=1\textwidth]{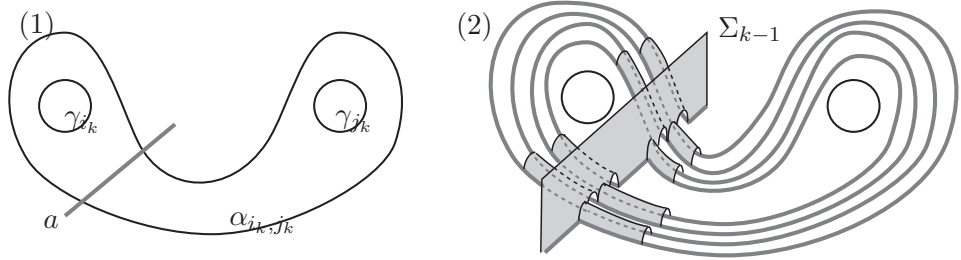}
\end{center}
\caption{[Surgery 2] 
(1) Sub-arc $a$ in $S\times \{0\}$. 
(2) Part of the $U$-tunnels for $a$ where $\epsilon_k=2$.}
\label{tunnel}
\end{figure}
To each side of the arc $a$ we add $|\epsilon_k|$ many tunnels, called $U$-{\em tunnels}, embedded in $S \times [0, \epsilon]$ ($\epsilon \ll 1$). Their feet do not touch the feet of $T$-tunnels of Surgery 1, and are contained in the arc $(A_{i_k, j_k})^{\epsilon_k}a$. 
See Figure~\ref{tunnel}-(2). 
Round the corners where the $U$-tunnels and the original surface $\Sigma_{k-1}$ meet. 
The orientations of the $U$-tunnels are induced by that of $\Sigma_{k-1}$. 
If arc $a$ is already a part of the feet of a $U$-tunnel in $\Sigma_{k-1}$, then the new tunnel is dug lower than the existing tunnel.

In general, since we have multiple of sub-arcs with $i(a, \alpha_{i_k, j_k})=2$ we construct nested mutually disjoint $U$-tunnels. 
See Figure~\ref{nested-tunnels}.
%
\begin{figure}[htpb!]
\begin{center}

\psfrag{i}{$\gamma_{i_k}$}
\psfrag{j}{$\gamma_{j_k}$}
\psfrag{A}{$\alpha_{i_k, j_k}$}
\psfrag{a}{arc $a$}
\psfrag{b}{arc $a'$}
\psfrag{C}{$(A_{i_k, j_k})^{\epsilon_j}a$}
\psfrag{d}{$(A_{i_k, j_k})^{\epsilon_j}a'$}
\psfrag{1}{(1)}
\psfrag{2}{(2)}

\includegraphics[width=1\textwidth]{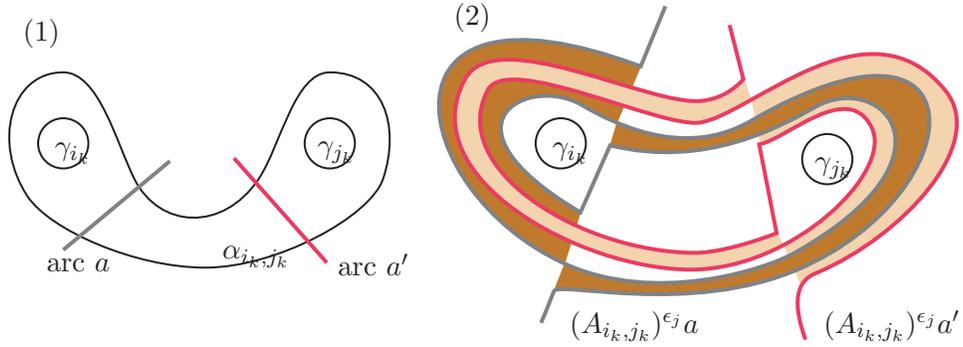}
\end{center}
\caption{Arcs $(A_{i_k, j_k})^{\epsilon_k}a$ and $(A_{i_k, j_k})^{\epsilon_k}a'$, where $\epsilon_k = 1$, form nested disjoint $U$-tunnels, dark shaded and lightly shaded.}
\label{nested-tunnels}
\end{figure}

\begin{proposition}\label{count of U-tunnel}
Up to small perturbation, each $U$-tunnel has one hyperbolic singularity in the characteristic foliation. 
For each arc $a$ with $i(a, \alpha_{i_k, j_k})=2$, the signs of the hyperbolic points on two $U$-tunnels are identical if and only if the $U$-tunnels are on the same side of $a$. 
\end{proposition}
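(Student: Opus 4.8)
The plan is to treat each $U$-tunnel exactly as the $T$-tunnels were treated in Proposition~\ref{T-tunnel-prop}, the only new ingredient being the comparison of signs across the two sides of $a$. For the first assertion I would perturb a given $U$-tunnel so that the heights of its two shoulders, measured in the $[0,\epsilon]$-direction of $S\times[0,\epsilon]$, are unequal, in the same manner in which the heights above $p_2,p_6$ were made shorter than those above $p_3,p_5$ in the proof of Proposition~\ref{T-tunnel-prop}. Since the tunnel is a band arching over the page and, by Assumption~\ref{foliation-assumption}, the page foliation is the standard one, a generic such arch is tangent to $\xi$ at a single interior point; the unequal shoulder heights exclude an elliptic tangency and force exactly one hyperbolic saddle. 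Rounding the corners where the $U$-tunnel meets $\Sigma_{k-1}$ creates no further tangencies, since along those corners the surface can be kept transverse to $\xi$.

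For the sign statement I would invoke the orientation convention for leaves fixed just before Assumption~\ref{foliation-assumption}, under which the saddle is positive exactly when the positive side of the oriented band faces the positive normal of $\xi$ at the tangency. Two $U$-tunnels on the same side of $a$ are, by construction (see Figure~\ref{nested-tunnels}), disjointly nested parallel copies with one dug lower than the other, hence carried one onto the other by a small vertical translation in $S\times[0,\epsilon]$. Such a translation preserves both the orientation induced from $\Sigma_{k-1}$ and, to first order, the contact planes, so the two bands present the same face upward at their saddles and the two hyperbolic points share a sign. Two $U$-tunnels on opposite sides of $a$, on the other hand, arch over $a$ in opposite directions while carrying the orientation induced from the one surface $\Sigma_{k-1}$; they are interchanged by the reflection of $S\times[0,\epsilon]$ across the vertical plane containing $a$, which by Assumption~\ref{foliation-assumption} preserves the reflection-symmetric standard page foliation near $a$. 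Exactly as in the facing argument that closes the proof of Proposition~\ref{T-tunnel-prop}, the opposite arching makes the positive side of one band and the negative side of the other face the positive normal of $\xi$, so the two saddles carry opposite signs. The two cases together give the asserted ``if and only if''.

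The step I expect to be the main obstacle is the opposite-sides comparison, because the reflection used there is not a contactomorphism. The point to check carefully is that only the local, reflection-symmetric model of the page foliation near $a$ (Assumption~\ref{foliation-assumption}) together with the induced band orientation enters the determination of the saddle sign, so the global chirality of $\xi$ plays no role and the facing argument of Proposition~\ref{T-tunnel-prop} transfers verbatim. A secondary point, needed so that the single-saddle count of the first assertion survives assembly, is to confirm that the nesting together with the ``dug lower'' convention keeps all $2|\epsilon_k|$ tunnels mutually disjoint and their rounded corners transverse to $\xi$.
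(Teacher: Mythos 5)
There is a genuine gap, and it lies in the very mechanism that produces the singularities. The paper's proof of the first assertion rests on a specific geometric input from Assumption~\ref{foliation-assumption}-(2): the characteristic foliation of the page $S\times\{0\}$ is transverse to the Dehn-twist curve $\alpha_{i_k,j_k}$ except at exactly two tangency points $p_1,p_4$, and one may arrange that every arc $a$ with $i(a,\alpha_{i_k,j_k})=2$ separates $p_1$ from $p_4$. A $U$-tunnel's ridge runs parallel to the half of $\alpha_{i_k,j_k}$ lying on one side of $a$; near the transverse crossings of the foliation with that half there is no tangency of the tunnel with $\xi$, and near the single tangency point ($p_1$ or $p_4$) on that side there is exactly one hyperbolic point. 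Your claim that ``a generic such arch is tangent to $\xi$ at a single interior point'' is not justified and is false without this input: the number of tangencies of the tunnel with $\xi$ is governed by the number of tangencies of the page foliation with the tunnel's core, which a priori could be $0$, $2$, or more. The shoulder-height perturbation you import from Proposition~\ref{T-tunnel-prop} is the wrong tool here --- it is adapted to $T$-tunnels, whose two feet sit on vertical $\A$-annuli around different bindings so that the height function controls the tangency; a $U$-tunnel has both feet on the same page $S\times\{0\}$ and its singularity is pinned down by the page foliation, not by an asymmetry of the shoulders.

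The second assertion inherits the same problem. The two hyperbolic points being compared sit near two \emph{different} tangency points $p_1$ and $p_4$, so the reflection of $S\times[0,\epsilon]$ across a vertical plane containing $a$ is of no help: as you concede, it is not a contactomorphism, and the ``reflection-symmetric local model near $a$'' you hope to fall back on does not exist, precisely because the relevant singularities live away from $a$, at $p_1$ and $p_4$, where the foliation has no such symmetry. The paper's argument is instead purely about co-orientations: nested tunnels on the same side of $a$ have positive normals pointing the same way (both outward or both inward), tunnels on opposite sides --- carrying the orientation of the single oriented surface $\Sigma_{k-1}$ --- have positive normals pointing opposite ways, and the sign of the unique hyperbolic point is read off from the normal direction at the tangency. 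Your same-side argument (vertical translation of nested parallel tunnels) is consistent with this and is fine; the opposite-side argument needs to be replaced by the normal-direction comparison, after the single-tangency count has been established as above.
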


\begin{proof}
By Assumption~\ref{foliation-assumption}-(2), the characteristic foliation near $\alpha_{i_k, j_k}$ is as sketched in Figure~\ref{foliation-of-tunnel}-(1).
%
\begin{figure}[htpb!]
\begin{center}

\psfrag{i}{$\gamma_{i_k}$}
\psfrag{j}{$\gamma_{j_k}$}
\psfrag{A}{$\alpha_{i_k, j_k}$}
\psfrag{1}{$p_1$}
\psfrag{2}{$p_2$}
\psfrag{3}{$p_3$}
\psfrag{4}{$p_4$}
\psfrag{a}{(1)}
\psfrag{b}{(2)}
\psfrag{c}{(3)}

\includegraphics[width=0.7\textwidth]{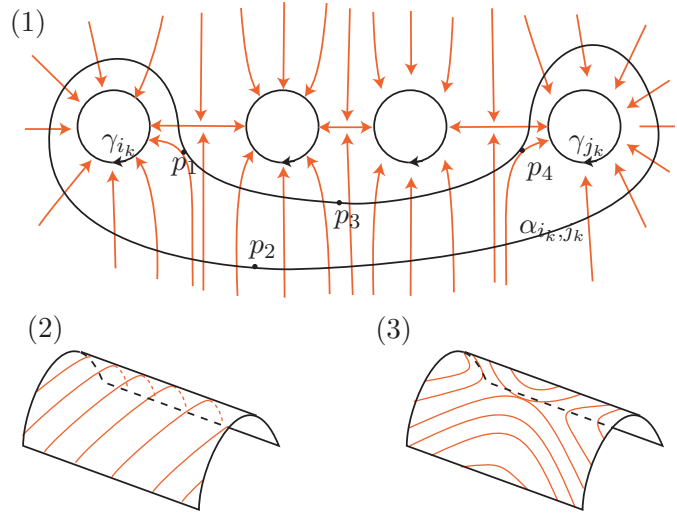}
\end{center}
\caption{(1) The characteristic foliation on $S\times \{0\}$ near $\alpha_{i_k, j_k}$.
(2) $U$-tunnel near $p_2$ and $p_3$.
(3) $U$-tunnel near $p_1$ and $p_4$. }
\label{foliation-of-tunnel}
\end{figure}
Near the points where the characteristic foliation transversely intersects $\alpha_{i_k, j_k}$ (for example, points $p_2$ and $p_3$), 
there are no singularities in the foliation of the U-tunnels as depicted in Figure~\ref{foliation-of-tunnel}-(2).

Let $p_1, p_4$ be the points where the leaves are tangent to $\alpha_{i_k, j_k}$.
We may assume that every arc $a$ with $i(a, \alpha_{i_k, j_k})=2$ separates $p_1$ and $p_4$. 
Each $U$-tunnel contains a single hyperbolic point near $p_1$ or $p_4$ as sketched in Figure~\ref{foliation-of-tunnel}-(3).
Note that the positive normals to two $U$-tunnels are pointing the same direction (outward or inward) if and only if the $U$-tunnels locate in the same side of $a$. 
Hence the signs of two hyperbolic points are the same if and only if the associated $U$-tunnels are located in the same side of $a$.
\end{proof}

The $T$-tunnels and $U$-tunnels intersect in general.
By small perturbation, if necessary, we may assume the intersection curves do not contain any singularities of the characteristic foliation. 
We resolve the intersections by the cut and re-glue operations so that the orientations match. 
Since the intersections are mutually disjoint simple closed curves, no new singular points would be created.

Now we have obtained an immersed surface $\Sigma_k$ in $S \times [0,1]$ that satisfies the boundary condition (\ref{boundary-equation}).
Inductively, we construct immersed surfaces $\Sigma_0, \Sigma_1, \cdots, \Sigma_l$ in $S \times [0,1]$.

We further deform $\Sigma_l$ to obtain an embedded surface: 
Since we have added numerous dummy $\A$-annuli during Surgery 2, the boundary $\partial \Sigma_l$ may contain $c$-circles.
Following the algorithm described near Figure~\ref{cancelation}, we pair up two $\A$-annuli of opposite signs and glue them along their $c$-circles.
Since the algebraic count of such $c$-circle around each $\gamma_j$ ($j=2, \cdots, r$) is $0$, we can remove all the $c$-circles of $\Sigma_l$.
Next, suppose $\A_\bullet$ and $\A_\circ$ are a pair of $\A$-annuli glued  along their $c$-circles. 
The intersections between $\A_\bullet \cup \A_\circ$ and $\Sigma_l$ (or former $\mathcal D$-rectangles) are arcs whose end points are on the braid $b$.
We resolve each intersection by the cut and re-glue operation that creates two new hyperbolic points in the characteristic foliation. 
Since $\A_\bullet$ and $\A_\circ$ have opposite signs, Proposition~\ref{prop-sign} implies that the signs of the hyperbolic points are opposite.

Finally the surface $\Sigma_l$ is embedded in $S \times [0,1]$, whose 
boundary is 
\begin{eqnarray*}
\partial \Sigma_l &=& 
b \cup ((\delta_1 \cup \cdots \cup \delta_n) \cap (S\times \{0\})) \cup ((\delta_1 \cup \cdots \cup \delta_n) \cap (S\times \{1\}))\\
&& 
((d_2 \cup \cdots \cup d_r) \times \{1\}) 
\ \cup \ 
(\phi (d_2 \cup \cdots \cup d_r) \times \{0\}), 
\end{eqnarray*}
here $d_j \times \{1\}$ and $\phi(d_j)\times \{0\}$ mean the $s_j$ copies of each.

\subsection{Glueing $\Sigma_l$ by monodromy to obtain $\Sigma$}\label{glueing-section}
\quad \newline
In order to obtain a desired Seifert surface $\Sigma \subset M_{(S, \phi)}$ for the braid $b$, topologically it is enough to glue the boundary components of $\Sigma_l$;
\begin{eqnarray*}
\delta_i \cap (S\times \{0\})
&\mbox{ with }&
\delta_i \cap (S\times \{1\})
\quad \mbox{ for } i=1, \cdots, n,
\\
s_j\mbox{ copies of }
\phi (d_j) \times \{0\}
&\mbox{ with }&
s_j\mbox{ copies of }
d_j \times \{1\} \quad \mbox{ for } j=2, \cdots, r.
\end{eqnarray*}
Note that near the bindings, the monodromy $\phi$ is the identity map, hence   $\delta_i \cap (S\times \{1\})$ and $\delta_i \cap (S\times \{0\})$ can be identified under $\phi$.

The remaining task is to justify that the characteristic foliations near $\phi (d_j) \times \{0\}$ and $d_j \times \{1\}$ smoothly match. 
To this end, we fix a small $\varepsilon>0$ and  by using braid isotopy we assume that $\Sigma_l$  is ``perpendicular'' to the pages $S \times \{\tau\}$ ($0\leq \tau \leq \varepsilon)$, i.e., letting $t$ be the coordinate for $[0,1]$, at any point $p \in (S \times [0, \varepsilon]) \cap \Sigma_l$ away from the bindings, the vector $(\frac{\partial}{\partial t})_p$ is contained in the tangent plane $T_p\Sigma_l$.
Let $\alpha_0=\beta+ C dt$ denote the contact $1$-form on the page $S \times \{0\}$ away from the bindings, where $\beta$ is a $1$-form on $S$ and  $C \gg 1$. 
By the argument in \cite[p.152]{Ge}, away from the bindings, we may assume that 
\begin{itemize}
\item
the contact form in  $S \times [\varepsilon, 1]$  is constantly 
$\alpha_1=\phi^* \beta + C dt$, 
\item
on the page $S \times \{ s\varepsilon\}$, $s \in [0,1]$, the contact form is 
$$\alpha_s := (1-s) \beta + s (\phi^* \beta) + Cdt.$$
\end{itemize}
Now not only we can glue the boundaries of $\Sigma_l$ by using the monodromy $\phi$, but also the characteristic foliation is smoothly extended. 
Since $\alpha_s(\frac{\partial}{\partial t}) >0,$ if $p$ is a point in $S \times \{s\varepsilon\}$ away from the bindings, the contact plane $\xi_p$ does not contain $(\frac{\partial}{\partial t})_p$.
This shows that there are no singularities in the characteristic foliation on $\Sigma_l \cap (\rm{Int}(S) \times [0, \varepsilon])$.

\section{Proof of Theorem~\ref{sl-formula}}\label{proof-section}

Let $K \subset (M, \xi)$ be a null-homologous transverse knot with a Seifert surface $\Sigma$. 
After small perturbation, we may assume that the characteristic foliation on $\Sigma$ is of Morse-Smale type (see \cite[Definition 4.6.8]{Ge} for definition). 
Let $e^+$ ($e^-$) and $h^+$ ($h^-$) represent the numbers of positive (negative) elliptic and positive (negative) hyperbolic singularities of the characteristic foliation.
It is known (see \cite{E1} for example) that the self linking number of $K$ relative to the homology class $[\Sigma] \in H_2(M, K; \Z)$ satisfies:
\begin{equation}\label{sl-formula-eq}
sl(K, [\Sigma])=-(e^+-e^-)+(h^+-h^-).
\end{equation}

The next lemma investigates the entry $t_{i,j}$ of the monodromy matrix.
\begin{lemma}\label{symmetry of T}
For each $i=2, \cdots, r$, we have
\begin{equation}
\label{eq-of-t}
t_{i,j} = \left\{
\begin{array}{ll}
\displaystyle{\sum_{{1 \leq m \leq l} \atop{(i_m, j_m) = (i, j)}}} \epsilon_m, 
\qquad 
\mbox{if }  j > i, \\
\displaystyle{\sum_{{1 \leq m \leq l} \atop{(i_m, j_m) = (j, i)}}} \epsilon_m, 
\qquad 
\mbox{if } j < i, 
\end{array}
\right.
\end{equation}
\begin{equation}
\label{eq-of-t2}
t_{i,i} = k_i +
\sum_{{1 \leq m \leq l} \atop{i_m=i \ {\rm or}\ j_m=i}} \epsilon_m 
= k_i +  \sum_{{2\leq j \leq r} \atop{j \neq i}} t_{i,j}. 
\end{equation}
In particular $T$ is a symmetric matrix, i.e., $t_{i,j}=t_{j,i}$.
\end{lemma}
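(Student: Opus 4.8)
The plan is to compute the action of $\phi_*$ on the homology classes $[d_i]$ directly from the factorization \eqref{eq of phi}, keeping track of how each Dehn twist in the product contributes to $[d_i] - \phi_*[d_i]$. The key observation is that homology is abelian, so although the twists $A_{i,j}$ and $A_i$ do not commute as mapping classes, their \emph{induced actions on $H_1(S;\Z)$} are easy to understand individually, and the total effect on $[d_i]$ is additive in the sense that each twist factor contributes an integer multiple of a single boundary class.

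First I would recall the effect of a single positive Dehn twist on the homology of an arc. For the twist $A_{i,j}$ about the loop $\alpha_{i,j}$ surrounding $\gamma_i \cup e_{i,j} \cup \gamma_j$, an arc $d$ crossing $\alpha_{i,j}$ geometrically once picks up a copy of $[\alpha_{i,j}]$ under the twist; and in $H_1(S;\Z)$ one has $[\alpha_{i,j}] = [\gamma_i] + [\gamma_j]$ (up to sign fixed by orientation), since $\alpha_{i,j}$ bounds together with $\gamma_i$ and $\gamma_j$ inside $S$. Similarly the twist $A_i$ about $\alpha_i$, a curve parallel to $\gamma_i$, contributes $[\gamma_i]$ with multiplicity equal to the (signed) number of intersections of the arc with $\alpha_i$. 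So the whole computation reduces to counting, for the arc $d_i$, how many times it crosses each Dehn-twist curve appearing in \eqref{eq of phi}, and with what sign.

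Next I would set up the bookkeeping. By construction $d_i$ runs from $\gamma_i$ to $\gamma_1$, and $\alpha_{i_m,j_m}$ is a loop surrounding $\gamma_{i_m} \cup e_{i_m,j_m} \cup \gamma_{j_m}$; from the geometry in Figure~\ref{setting}, the arc $d_i$ meets $\alpha_{i_m,j_m}$ essentially (once, up to isotopy) precisely when $i \in \{i_m, j_m\}$, and it meets the boundary-parallel curve $\alpha_i$ when the exponent $k_i$ acts. Tracking each factor of $(A_{i_m,j_m})^{\epsilon_m}$ and $(A_i)^{k_i}$ against $[d_i]$ and reading off the coefficient of $[\gamma_j]$ then yields exactly \eqref{eq-of-t} for $j \neq i$: the coefficient $t_{i,j}$ collects $\epsilon_m$ over all $m$ with $\{i_m,j_m\}=\{i,j\}$. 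For the diagonal term, the contribution of $[\gamma_i]$ comes both from $k_i$ (via $A_i$) and from each $A_{i_m,j_m}$ with $i \in \{i_m,j_m\}$ (since $[\alpha_{i_m,j_m}]$ contains a $[\gamma_i]$ summand), giving \eqref{eq-of-t2}; the rewriting $t_{i,i} = k_i + \sum_{j\neq i} t_{i,j}$ is then immediate on regrouping those same $\epsilon_m$. Symmetry $t_{i,j}=t_{j,i}$ for $i \neq j$ falls out because the defining sum over $\{m : \{i_m,j_m\}=\{i,j\}\}$ is manifestly symmetric in $i$ and $j$.

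The main obstacle I anticipate is verifying the intersection/orientation claim precisely: namely that in $H_1(S;\Z)$ one has $[\alpha_{i_m,j_m}] = \pm([\gamma_{i_m}]+[\gamma_{j_m}])$ with a \emph{consistent} sign convention, and that the geometric intersection $i(d_i,\alpha_{i_m,j_m})$ contributes with the correct sign so that the net coefficient is $+\epsilon_m$ rather than $-\epsilon_m$. This requires pinning down the orientation conventions from Figures~\ref{setting} and \ref{rh-twist} and checking that the arc $e_{i_m,j_m}$ connecting the two boundary circles does not introduce extra crossings with $d_i$ that alter the count. Once the sign conventions are fixed consistently, the rest is a routine additive bookkeeping over the factors of \eqref{eq of phi}, and the symmetric structure of the answer serves as a useful consistency check.
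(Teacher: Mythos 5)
Your proposal is correct and takes essentially the same route as the paper: the paper likewise computes $[d_i]-\phi_*[d_i]$ factor by factor from (\ref{eq of phi}), using $[\alpha_{i_m,j_m}]=[\alpha_{i_m}]+[\alpha_{j_m}]=-([\gamma_{i_m}]+[\gamma_{j_m}])$ and the fact that the algebraic intersection of $d_i$ with each twist curve is $\pm 1$ or $0$. The only point the paper makes explicit that you leave implicit is the justification of additivity, namely that the later twists fix the homology classes of the $\alpha$-curves ($[A_{i,j}(\alpha_{i',j'})]=[\alpha_{i',j'}]$), so each factor's contribution survives unchanged to the end of the product.
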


\begin{proof}
We orient circles $\alpha_i$ and $\alpha_{i, j}$ counterclockwise. 
In $H_1(M;\Z)$ we have $[\alpha_{i_m, j_m}] = [\alpha_{i_m}] + [\alpha_{j_m}]$ and $[\alpha_i] =- [\gamma_i]$. 
For any $2\leq i<j \leq r$ and $2\leq i' < j' \leq r$ 
$$
[A_{i,j} (\alpha_{i', j'})] = [A_{i,j} (\alpha_{i'})] +  [A_{i,j} (\alpha_{j'})] = [\alpha_{i'}]  + [\alpha_{j'}] = [\alpha_{i', j'}].
$$
Hence by the description of $\phi$ in (\ref{eq of phi}), for each $i=2, \cdots, r$, we have:
\begin{eqnarray*}
[d_i] - \phi_\ast [d_i] 
&=& 
- \sum_{{1 \leq m \leq l} \atop{i_m=i \ {\rm or}\ j_m=i}}
\left[ 
A_{i_l, j_l}^{\epsilon_l} \cdots 
A_{i_{m+1}, j_{m+1}}^{\epsilon_{m+1}} 
(\epsilon_m \alpha_{i_m, j_m})
\right]
- k_i [\alpha_i] \\
&=&
- \sum_{{1 \leq m \leq l} \atop{i_m=i \ {\rm or}\ j_m=i}}
\epsilon_m [\alpha_{i_m, j_m}] - k_i [\alpha_i] \\
&=&
\sum_{{1 \leq m \leq l} \atop{i_m=i \ {\rm or}\ j_m=i}}
\epsilon_m ([\gamma_{i_m}] + [\gamma_{j_m}]) + k_i [\gamma_i]. 
\end{eqnarray*}
Combining with (\ref{rewrite}) it follows that:
$$
\sum_{j=2}^r t_{i,j} [\gamma_j] = 
\sum_{{1 \leq m \leq l} \atop{i_m=i \ {\rm or}\ j_m=i}}
\epsilon_m ([\gamma_{i_m}] + [\gamma_{j_m}]) + k_i [\gamma_i]. $$
Comparing the coefficients, we obtain (\ref{eq-of-t}) and (\ref{eq-of-t2}). 
\end{proof}

Finally we are ready to prove the main theorem.
\begin{proof}[Proof of Theorem~\ref{sl-formula}]

We first investigate the algebraic count of hyperbolic singularities ($h^+ - h^-$) on the Seifert surface $\Sigma$ that we have constructed in Section~\ref{construction-section}.

\begin{itemize}
\item
The twisted bands for the braid word $\sigma_i^{\pm}$ contribute $a_\sigma$. 
\item
Recall an $\A^{\pm}$-annulus has one $\pm$ hyperbolic point, cf Figure~\ref{A-annulus}. 
Hence $\A$-annuli contribute $\Sigma_{j=2}^r a_{\rho_j}$. 
\item
As Figure~\ref{resolution-2} shows, the cut and re-glue operations during the course of constructing $\Sigma_l$ have created new hyperbolic singularities.
The algebraic count of such hyperbolic points is equal to $-\Sigma_{j=2}^r s_j a_{\rho_j}$. 
\item
By Proposition~\ref{T-tunnel-prop}, the $T$-tunnels also contribute to hyperbolic singularities (Figure~\ref{T-tunnel-foliation}). Using (\ref{eq-of-t}), its algebraic count is:
$$
\sum_{j=2}^r s_j 
\sum_{{1\leq m \leq l} \atop{i_m=j \ \rm{or} \ j_m=j}} (- \epsilon_m)
\ = \ 
-\sum_{j=2}^r s_j 
\sum_{{2\leq i \leq r} \atop{i \neq j}} t_{j,i}
$$
\item By Proposition~\ref{count of U-tunnel}, the $U$-tunnels do not contribute to the count.
\end{itemize}
The total algebraic count of hyperbolic points is:
$$
h^+ - h^- 
=
a_\sigma + \sum_{j=2}^r (1-s_j) a_{\rho_j} -\sum_{j=2}^r s_j \sum_{{2\leq i \leq r} \atop{i \neq j}} t_{j,i}
$$
By Definition~\ref{def-rectangle-D}, the intersection of $D_j$ and the binding $\gamma_j$ (resp. $\gamma_1$) turns a positive (resp. negative) elliptic point in the final surface $\Sigma$. 
Since we have used $s_j$ copies of $D_j$ to construct $\Sigma$, 
\begin{eqnarray*}
e^+ 
&=&
(n; \delta{\mbox{-disks}}) + (s_2 +\cdots + s_r), \\
e^- 
&=&
(s_2 +\cdots + s_r).
\end{eqnarray*} 
By (\ref{sl-formula-eq}) the self-linking number is:
$$
sl(b, [\Sigma]) 
= -(e^+ - e^-) +  (h^+ - h^-) 
= -n + a_\sigma + \sum_{j=2}^r a_{\rho_j} (1 - s_j) - \sum_{{2\leq i \leq r} \atop{i \neq j}} t_{j,i}
$$
This completes the proof of Theorem~\ref{sl-formula}. 
\end{proof}

\begin{proof}[Proof of Corollary~\ref{cor-neg-stab}]

A negative stabilization about the binding $\gamma_1$ changes:
\begin{eqnarray*}
n & \mapsto & n+1 
\\
a_\sigma & \mapsto & a_\sigma - 1 
\end{eqnarray*}
which change the quantity of (\ref{sl-formula}) by $-2$.
A negative stabilization about the binding $\gamma_k$, where $k = 2, \cdots, r$, changes:
\begin{eqnarray*}
n & \mapsto & n+1 
\\
a_\sigma & \mapsto & a_\sigma -1 + 2 a_{\rho_k} 
\qquad (\ast)
\\
s_k & \mapsto & s_k + 1 
\quad (\mbox{by the proof of Proposition~\ref{positivity of s}}) 
\\
a_{\rho_j} & \mapsto & a_{\rho_j} + t_{k,j} 
\quad (\mbox{by (\ref{a-rho-and-T})}) 
\end{eqnarray*}
The reason for ($\ast$) is the following.
First, we subtract $1$ from $a_\sigma$ because of the negative kink due to the negative braid stabilization. 
A negative stabilization introduces a new $(n+1)$th strand. 
Let $\rho_k'$ denote the positive winding of the $(n+1)$th strand around $\gamma_k$. 
Let $\sigma_n$ be the usual positive half twist of $n$th and $(n+1)$th strands. 
Then $\rho_k$ and $\rho_k'$ are related to each other by $\rho_k = \sigma_n \rho_k' \sigma_n$. 
This is the reason we need to add $2 a_{\rho_k}$.

Plug the above values into (\ref{sl-formula}) and subtract the original (\ref{sl-formula}), we have 
\begin{eqnarray*}
&& 
\left(
-(n+1) + (a_\sigma -1 + 2 a_{\rho_k})
+ \sum_{j\neq k} (a_{\rho_j} + t_{k,j}) (1-s_j)
\right. \\
&&
\left.
+ (a_{\rho_k} + t_{k,k}) (1-s_k-1) 
- \sum_{j \neq k} s_j \sum_{i \neq j} t_{j,i} 
- (s_k + 1) \sum_{i\neq k} t_{k,i}
\right) \\
&&
-
\left(
-n + a_\sigma + \sum_{j=2}^r a_{\rho_j} (1 - s_j) - \sum_{j=2}^r s_j \sum_{{2\leq i \leq r} \atop{i \neq j}} t_{j,i}
\right) \\
&=&
-2 + 2 a_{\rho_k} + \sum_{j\neq k} t_{k,j} (1-s_j) - a_{\rho_k} 
- t_{k,k} s_k - \sum_{i\neq k} t_{k,i} \\
&=&
-2+ a_{\rho_k} - \sum_{j=2}^r t_{k,j} s_j 
= 
-2 + a_{\rho_k} - \sum_{j=2}^r s_j t_{j,k} 
\stackrel{(\ref{a-rho-and-T})}{=} 
-2,
\end{eqnarray*}
where the second last equation follows by the symmetry of the matrix studied in Proposition~\ref{symmetry of T}.
\end{proof}

A positive braid stabilization induces the same changes in $n, s_k, a_{\rho_j}$ as above, but it changes $a_\sigma$ to $a_\sigma + 1 + 2a_{\rho_k}$. 
A similar calculation shows that our self-linking formula (\ref{sl-formula}) is invariant under a positive braid stabilization. 
Knowing that a positive stabilization preserves the transverse isotopy  class of any braid, this observation justifies our main theorem.

\section*{Acknowledgements}  
The author would like to thank David Gay for helpful conversations, Matt Hedden for Remark~\ref{remark by Matt}, and the referee for thoughtful comments. 
The author was partially supported by NSF grants DMS-0806492.

\bibliographystyle{mrl}\

\begin{thebibliography}{99}

\bibitem{Ben} Bennequin, Daniel. {\em Entrelacements et {\'e}quations de Pfaff,} Ast{\'e}risque, 107-108, (1983) 87-161.


\bibitem{B} Birman, Joan S. {\em Braids, links, and mapping class groups.} Annals of Mathematics Studies, No. 82. Princeton University Press, Princeton, N.J


\bibitem{E1}Etnyre, John B. {\em Planar open book decompositions and contact structures. } Int. Math. Res. Not. 2004, no. 79, 4255-4267. 

\bibitem{EO}Etnyre, John B.; Ozbagci, Burak. {\em Invariants of contact structures from open books.} Trans. Amer. Math. Soc. 360 (2008), no. 6, 3133--3151.

\bibitem{FM}Farb, Benson; Margalit, Dan. {\em A primer on mapping class groups.} Version 5.0

\bibitem{Ge}Geiges, Hansj\"{o}rg. {\em An introduction to contact topology.} Cambridge Studies in Advanced Mathematics, 109. Cambridge University Press, Cambridge, 2008.

\bibitem{G}Giroux, Emmanuel. {\em Contact geometry: from dimension three to higher dimensions.} Proceedings of the International Congress of Mathematicians, Vol. II (Beijing, 2002), 405-414, Higher Ed. Press, Beijing, 2002.


\bibitem{KP} Kawamuro, Keiko; Pavelescu, Elena. {\em The self-linking number in annulus and pants open book decompositions.} 
Algebr. Geom. Topol. 11 (2011) 553-585.

\bibitem{OS} Ozbagci, Burak; Stipsicz, Andr\'as I. {\em Surgery on contact 3-manifolds and Stein surfaces.} Bolyai Society Mathematical Studies, 13. Springer-Verlag, Berlin; J\'anos Bolyai Mathematical Society, Budapest, 2004.


\bibitem{P}Pavelescu, Elena. {\em Braids and Open Book Decompositions.}
Ph.D. thesis, University of Pennsylvania (2008) Available at 
{\tt http://www.math.upenn.edu/grad/dissertations/ ElenaPavelescuThesis.pdf}

\bibitem{TW}Thurston, W. P.; Winkelnkemper, H. E. {\em On the existence of contact forms}. Proc. Amer. Math. Soc. 52 (1975), 345-347.

\end{thebibliography}

\end{document}